\documentclass[11pt]{amsart}

\textheight 220mm
\textwidth 150mm
\hoffset -16mm
  
\usepackage{amsmath,amssymb,amsthm}
\input xy  
\xyoption{all}
\newtheorem{theorem}{Theorem}[section]

\newtheorem{cor}[theorem]{Corollary}
\newtheorem{lemm}[theorem]{Lemma}
\newtheorem{prop}[theorem]{Proposition}
\newtheorem{question}[theorem]{Question}

\theoremstyle{definition}
\newtheorem{defi}[theorem]{Definition}
\newtheorem{remk}[theorem]{Remark}
\newtheorem{exam}[theorem]{Example}

\newcommand{\PP}{\mathcal{P}}

\newcommand{\aaa}{\mbox{\boldmath $a$}}
\newcommand{\bb}{\mbox{\boldmath $b$}}
\newcommand{\cc}{\mbox{\boldmath $c$}}
\newcommand{\dd}{\mbox{\boldmath $d$}}
\newcommand{\uu}{\mbox{\boldmath $u$}}
\newcommand{\vv}{\mbox{\boldmath $v$}}
\newcommand{\kk}{I}
\newcommand{\TT}{\operatorname{\mathcal T}\nolimits}

\newcommand{\K}{\mathsf{K}}
\newcommand{\thick}{\mathsf{thick}} 

\newcommand{\La}{\Lambda}

\newcommand{\point}[1]{\bullet}
\newcommand{\add}{\operatorname{add}\nolimits}
\newcommand{\proj}{\operatorname{proj}\nolimits}

\newcommand{\Hom}{\operatorname{Hom}\nolimits}

\newcommand{\RHom}{\mathbf{R}\strut\kern-.2em\operatorname{Hom}\nolimits}
\renewcommand{\mod}{\operatorname{mod}\nolimits}

\def\Ker{\mathop{\mathrm{Ker}}\nolimits}
\def\Coker{\mathop{\mathrm{Coker}}\nolimits}
\def\Hom{\mathop{\mathrm{Hom}}\nolimits}
\def\End{\mathop{\mathrm{End}}\nolimits}

\def\mod{\mathop{\mathrm{mod}}\nolimits}

\def\RHom{\mathop{\mathbb R\mathrm{Hom}}\nolimits}

\def\Soc{\mathop{\mathrm{Soc}}\nolimits}

\def\add{\mathop{\mathrm{add}}\nolimits}

\def\op{\mathop{\mathrm{op}}\nolimits}

\begin{document}
\title{On mutations of selfinjective quivers with potential}
\author{Yuya Mizuno}
\address{Graduate School of Mathematics\\ Nagoya University\\ Frocho\\ Chikusaku\\ Nagoya\\ 464-8602\\ Japan}
\email{yuya.mizuno@math.nagoya-u.ac.jp}
\begin{abstract}
We study silting mutations (Okuyama-Rickard complexes) for selfinjective algebras given by quivers with potential (QPs). We show that silting mutation is compatible with QP mutation. 
As an application, we get a family of derived equivalences of Jacobian algebras.

\end{abstract}
\maketitle
\section{Introduction}
Derived categories are nowadays considered as an essential tool in the study of many areas of mathematics. 
In the representation theory of algebras, derived equivalences of algebras
have been one of the central themes and extensively investigated. 
It is well-known that endomorphism algebras of tilting complexes are derived equivalent to the original algebra \cite{R1}.  
Therefore it is an important problem to give concrete methods to calculate endomorphism algebras of tilting complexes.   
In this paper, we focus on one of the fundamental tilting complexes over selfinjective algebras, known as Okuyama-Rickard complexes, 
which play an important role in the study of Brou\'e's abelian defect group conjecture. 
From a categorical viewpoint, they are nowadays
interpreted as a special case of silting mutation \cite{AI}. We provide a method to determine the quivers with relations of the endomorphism algebras of Okuyama-Rickard complexes when selfinjective algebras are given by quivers with potential (QPs for short).

The notion of QPs was introduced by \cite{DWZ}, which gives a better understanding of cluster algebras (we refer to \cite{K2}). 
Recently it has been discovered that mutations of QPs (Definition \ref{mutation QP}) 
give rise to derived equivalences in several situations, for example  \cite{BIRS,IR,KeY,L1,L2,M,V}. 
The deep connnection between mutations and derived equivalences is also quite useful to study the derived equivalence classification of cluster-tilted algebras \cite{B,BHL1,BHL2,BV}. 
The aim of this paper is to give a similar (but different) type of derived equivalences by comparing QP mutation and silting mutation (Definition \ref{silting mutation}). 

Our main result is the following (see sections 2 and 3 for unexplained notions).

\begin{theorem}\label{intro1}(Proposition \ref{tilt self}, Theorem \ref{main1}, Corollary \ref{cor} and Lemma \ref{leftright})
Let $(Q,W)$ be a selfinjective QP (Definition \ref{self QP}) and $\Lambda:=\PP(Q,W)$. 
For a set of vertices $\kk\subset Q_0$, we assume the following conditions. 
\begin{itemize}
\item[$\bullet$] Any vertex in $\kk$ is not contained in 2-cycles in $Q$.
\item[$\bullet$] There are no arrows between vertices in $\kk$.
\end{itemize}
 
{\rm (a)} We have an algebra isomorphism $$\End_{\K^{\rm{b}}(\proj{\Lambda})}(\mu_{\kk}(\Lambda))\cong\PP(\mu_{\kk}(Q,W)),$$
where $\mu_{\kk}(\Lambda)$ is left (or right) silting mutation and $\mu_{\kk}(Q,W)$ is a composition of  QP mutation of the vertices $\kk$.

{\rm (b)} If $\sigma I=I$ for the Nakayama permutation $\sigma$ of $\La$, then 
$\mu_{\kk}(\Lambda)$ is a tilting complex. In particular, 
$\La$ and $\PP(\mu_{\kk}(Q,W))$ are derived equivalent.

\end{theorem}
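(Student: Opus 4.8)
The plan is to verify the statement in two stages corresponding to parts (a) and (b). For part (a), the strategy is to compute the endomorphism algebra of the silting mutation $\mu_{\kk}(\Lambda)$ directly and identify it with the Jacobian algebra of the mutated QP. First I would recall the explicit description of the Okuyama--Rickard complex (left silting mutation) at the set $\kk$: for each vertex $i\in\kk$ the indecomposable summand is the two-term complex $P_i \xrightarrow{} Q_i$ built from a minimal left $\add(\bigoplus_{j\notin\kk}P_j)$-approximation, while for $j\notin\kk$ the summand is simply the stalk complex $P_j$. Using the hypothesis that no arrows run between vertices of $\kk$, these approximations are controlled by the arrows of $Q$ incident to $\kk$, so the new quiver is exactly the combinatorial QP mutation quiver: arrows not touching $\kk$ survive, arrows into/out of $\kk$ get reversed, and composite arrows $i\to k\to j$ through $k\in\kk$ produce new arrows $i\to j$. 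One then computes the $\Hom$-spaces between these complexes in $\K^{\mathrm b}(\proj\Lambda)$, reading off generators and relations. The relations should match those cut out by the cyclic derivatives of the mutated potential $\mu_{\kk}(W)$; here the selfinjectivity of $(Q,W)$ is what guarantees enough structure (a symmetrizing form, or at least the Nakayama-twisted analogue) to make the $\Hom$ computation tractable and to see that the QP $\mu_{\kk}(Q,W)$ is again selfinjective. Since QP mutation $\mu_k$ is an involution only up to right-equivalence and may create 2-cycles, I would perform the mutations one vertex at a time, reduce the 2-cycles at each step, and check that the two hypotheses (no 2-cycles through $\kk$, no arrows within $\kk$) ensure the composite $\mu_{\kk}$ is well defined independent of the order --- this is essentially Lemma \ref{leftright} and the content behind "composition of QP mutation."

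For part (b), the key point is to upgrade "silting complex" to "tilting complex." By general silting theory (\cite{AI}), $\mu_{\kk}(\Lambda)$ is always a silting object of $\K^{\mathrm b}(\proj\Lambda)$, and it is a tilting object precisely when it has no negative self-extensions, i.e. $\Hom_{\K^{\mathrm b}(\proj\Lambda)}(\mu_{\kk}(\Lambda), \mu_{\kk}(\Lambda)[1]) = 0$ (the positive-degree vanishing is automatic for a left mutation of a tilting complex). So the plan is to show this single $\Hom$-vanishing. Writing $T = T_{\kk} \oplus T_{\kk^c}$ with $T_{\kk} = \bigoplus_{i\in\kk}(P_i\to Q_i)$ and $T_{\kk^c}=\bigoplus_{j\notin\kk}P_j$, the only potentially nonzero piece of $\Hom(T,T[1])$ comes from maps $T_{\kk^c}[0] \to T_{\kk}[1]$, equivalently from the cokernel terms $P_i\to Q_i\to C_i\to 0$: a morphism $P_j \to C_i$ not factoring through $Q_i$ gives a nonzero degree-$1$ map. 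The claim is that the hypothesis $\sigma I = I$ forces all such maps to vanish. The mechanism is duality: for a selfinjective algebra, $\Hom(P_j, C_i)$ modulo the image of $\Hom(P_j,Q_i)$ is dual (via the Nakayama functor $\nu = D\Lambda \otimes_\Lambda -$, which acts on projectives by the permutation $\sigma$) to a $\Hom$-space between the $\nu$-twisted objects; the condition $\sigma I = I$ means $\nu$ preserves the class of summands of type $T_{\kk}$ versus $T_{\kk^c}$, so the possible degree-$1$ maps get matched against degree-$(-1)$ maps of a left mutation, which vanish because a left mutation of a silting complex has no negative homology in that direction. Concretely I would run this through the exact triangles $P_i \to Q_i \to C_i \to P_i[1]$ and the long exact sequences they induce on $\Hom(P_j, -)$ and $\Hom(-, \nu P_?)$.

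Once the tilting property is established, the derived equivalence is immediate from Rickard's theorem (\cite{R1}): a tilting complex $T$ over $\Lambda$ satisfies $\End_{\K^{\mathrm b}(\proj\Lambda)}(T)$ is derived equivalent to $\Lambda$, and by part (a) that endomorphism algebra is $\PP(\mu_{\kk}(Q,W))$. The independence of left versus right mutation in the final sentence is then either a separate small check (left and right mutations at $\kk$ are related by a shift and by the Nakayama functor, again using $\sigma I = I$) or follows from Lemma \ref{leftright} as quoted.

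The main obstacle I anticipate is the relations computation in part (a): matching the relations of $\End_{\K^{\mathrm b}(\proj\Lambda)}(\mu_{\kk}(\Lambda))$ --- which come from homotopies and from composites of chain maps being null-homotopic --- with the Jacobian relations $\partial_a \mu_{\kk}(W)$ of the mutated potential is delicate bookkeeping. In particular, the reduction of 2-cycles in the mutated QP corresponds on the algebra side to cancelling contractible direct summands of complexes of morphisms, and keeping this correspondence coherent across a composition of mutations (rather than a single one) is where the two structural hypotheses on $\kk$ really earn their keep. The selfinjectivity hypothesis is essential throughout because it is what makes the cokernel modules $C_i$ and the Nakayama permutation interact controllably; without it neither the tilting property nor the clean identification of relations would go through.
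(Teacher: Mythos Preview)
Your overall architecture is right --- (a) is the substantial part, (b) follows by combining the tilting criterion with Rickard's theorem --- but the heart of (a) is where your proposal stays at the level of intention rather than method, and this is exactly where the paper invests its effort.

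You say you will ``compute the $\Hom$-spaces between these complexes \ldots\ reading off generators and relations'' and acknowledge that matching relations against the cyclic derivatives of $\mu_\kk(W)$ is ``delicate bookkeeping.'' The paper does not attempt a direct computation of this kind. Instead it uses a presentation criterion due to Buan--Iyama--Reiten--Smith (Proposition~\ref{birs2}): to show that a map $\phi':\widehat{KQ'}\to\End_\TT(T)$ is surjective with kernel the closure of the Jacobian ideal, it suffices to exhibit, for each vertex $i$, an exact sequence of a specific shape built from the putative arrows and relations. These exact sequences are obtained by constructing, for each indecomposable summand of $T$, a \emph{right 2-almost split sequence} in $\add T$ (Propositions~\ref{ver l} and~\ref{ver i}). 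The technical core is Proposition~\ref{I-III}, which establishes a weak 2-almost split sequence for each $P_l^*$ with $l\in\kk$ and auxiliary exactness results; its proof repeatedly uses that projectives over a selfinjective algebra are injective (so maps extend along monomorphisms), together with the four-term exact sequence $(\ref{resolution})$ from \cite{HI}. Without this machinery, ``reading off relations'' from homotopy classes of chain maps is not a workable plan: you have no mechanism that forces the relations you find to be generated precisely by the cyclic derivatives.

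Two smaller points. First, you misidentify Lemma~\ref{leftright}: it is not about the well-definedness of the composite QP mutation (that is a direct consequence of hypothesis (a2), handled in one sentence at the start of Section~\ref{main section}), but about the agreement of the endomorphism algebras of the \emph{left} and \emph{right} silting mutations. The paper proves it by applying the duality $\Hom_\Lambda(-,\Lambda):\K^{\rm b}(\proj\Lambda)\to\K^{\rm b}(\proj\Lambda^{\op})$, which sends right mutation over $\Lambda$ to left mutation over $\Lambda^{\op}$, and then invoking Theorem~\ref{main1} for $\Lambda^{\op}$. Second, for part~(b) the paper does not reprove the tilting criterion; Proposition~\ref{tilt self} (that $\mu_\kk(\Lambda)$ is tilting iff $\sigma\kk=\kk$) is simply quoted from \cite{AH,AI,D}. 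Your Nakayama-duality sketch for $\Hom(T,T[1])=0$ is a reasonable route to that result, but it is not part of the paper's argument.
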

 
Since selfinjective algebras are closed under derived equivalence, we conclude that from (b) above the new QP is  also a selfinjective QP, which is a result given in \cite[Theorem 4.2]{HI}. 
Then we can apply our result to the new QP again and these processes provide a family of derived equivalences.  
We note that Keller-Yang \cite{KeY} proved that, for two QPs related by QP mutation, their Ginzburg dg algebras, which are certain enhancement of Jacobian algebras, are derived equivalent though their Jacobian algebras are far from being derived equivalent in general.
On the other hand, Theorem \ref{intro1} tells us that Jacobian algebras are already derived equivalent in our setting. 

\subsection*{Notations}
Let $K$ be an algebraically closed field and $D:=\Hom_K(-,K)$.
All modules are left modules.
For a finite dimensional algebra $\La$, we denote by $\mod\Lambda$ the category of finitely generated $\Lambda$-modules and by add$M$ the subcategory of $\mod\Lambda$ consisting of direct summands of finite direct sums of copies of $M\in \mod\La$.  
The composition $fg$ means first $f$, then $g$.
For a quiver $Q$, we denote by $Q_0$ vertices and $Q_1$ arrows of $Q$. We denote by $s(a)$ the start vertex and by $e(a)$ the end vertex of an arrow or path $a$.\\

\section{Preliminaries}

\subsection{Quivers with potential}\label{QPs}
We recall the definition of quivers with potential. We follow \cite{DWZ}.

$\bullet$ Let $Q$ be a finite connected quiver without loops. 
We denote by $KQ_i$ the $K$-vector space with basis consisting of paths of length $i$ in $Q$, and by
$KQ_{i,cyc}$ the subspace of $KQ_i$ spanned by all cycles.
We denote the \emph{complete path algebra} by
$$\widehat{KQ}=\prod_{i\ge0}KQ_i$$
and by $J_{\widehat{KQ}}$ the Jacobson radical of $\widehat{KQ}$. 
A \emph{quiver with potential} (QP) is a pair $(Q,W)$ consisting of a finite connected quiver $Q$ without loops and 
an element $W\in\prod_{i\ge2}KQ_{i,{\rm cyc}}$, called a \emph{potential}.
For each arrow $a$ in $Q$, the \emph{cyclic derivative} $\partial_a:\widehat{KQ}_{cyc} \to \widehat{KQ}$  is defined as the continuous linear map 
satisfying $\partial_a(a_1\cdots a_d)=\sum_{a_i=a}a_{i+1}\cdots a_d a_1\cdots a_{i-1}$ for a cycle $a_1\cdots a_d$.
For a QP $(Q,W)$, we define the \emph{Jacobian algebra} by
\[\PP(Q,W)=\widehat{KQ}/{\mathcal J}(W),\]
where ${\mathcal J}(W)=\overline{\langle \partial_a W \mid a \in Q_1 \rangle}$
is the closure of the ideal generated by $\partial_aW$ with respect to the $J_{\widehat{KQ}}$-adic topology.

$\bullet$   
A QP $(Q,W)$ is called \emph{trivial} if $W$ is a linear combination
of cycles of length 2 and $\PP(Q,W)$ is isomorphic to the semisimple algebra $\widehat{KQ_0}$.
It is called \emph{reduced} if $W\in\prod_{i\ge3}KQ_{i,{\rm cyc}}.$

Following \cite{HI}, we use this terminology.

\begin{defi}\label{self QP}
We call a QP $(Q,W)$ \emph{selfinjective} if $\PP(Q,W)$ is a finite dimensional selfinjective algebra.
\end{defi}

Next we recall the definition of (pre-)mutation of QPs.

\begin{defi}\label{mutation QP} For each vertex $k$ in $Q$ not lying on a 2-cycle, we define \emph{pre-mutation} $\widetilde{\mu}_k(Q,W):=(Q',W')$ and obtain a new QP  as follows. 

\begin{itemize}
\item[(a)] $Q'$ is a quiver obtained from $Q$ by the following changes.

$\bullet$ Replace each arrow $a:u\to k$ in $Q$ by a new arrow $a^*:k \to u$.

$\bullet$ Replace each arrow $b:k\to v$ in $Q$ by a new arrow $b^*:v \to k$.

$\bullet$ For each pair of arrows $u\overset{a}{\to} k\overset{b}{\to}  v$, 
add a new arrow $[ab] : u \to v$
\item[(b)] $W' = [W] + \Delta$ is defined as follows.

$\bullet$  $[W]$ is obtained from the potential $W$ by replacing all compositions $ab$ by the new arrows $[ab]$ for each pair of arrows $u\overset{a}{\to} k\overset{b}{\to}  v$. 

$\bullet$  $\Delta={\displaystyle\sum_{\begin{smallmatrix}a,b\in Q_1\\
e(a)=k=s(b)\end{smallmatrix}}}[ba]a^*b^*$. 
\end{itemize}

Then \emph{mutation} ${\mu}_k(Q,W)$ is defined as a \emph{reduced part} of pre-mutation $\widetilde{\mu}_k(Q,W)$ (see \cite{DWZ}). 
\end{defi}


\subsection{Silting mutation}
The notion of silting objects was introduced by \cite{KV}, which is a generalization of tilting objects. 
Recently its theory has been rapidly developed and many connections have been discovered, for example \cite{BRT,AI,G,KoY}. 
In this section, we briefly recall their definitions and properties.

Let $\La$ be a finite dimensional algebra and 
$\TT:=\K^{\rm{b}}(\proj{\Lambda})$ be the homotopy category of bounded complexes of finitely generated projective $\Lambda$-modules. 

\begin{defi}
Let $T$ be an object of $\TT$. 
We call $T$ \emph{silting} (respectively, \emph{tilting})  
if $\Hom_{\TT}(T,T[i])=0$ for any positive integer $i>0$ (for any integer $i\neq0$) and satisfies $\TT=\thick{T}$, where $\thick T$ denotes  the smallest thick subcategory of $\TT$ containing $T$. 
\end{defi}

We call a morphism $f:X\to Y$ \emph{left minimal} 
if any morphism $g:Y\to Y$ satisfying $fg=f$ is an isomorphism. 
For an object $M\in\TT$, we call a morphism $f:X\to M'$ \emph{left $(\add{M})$-approximation} of $X$ 
if $M'$ belongs to $\add{M}$ and $\Hom_{\TT}(f,M'')$ is surjective for any object $M''$ in $\add{M}$. Dually we define a \emph{right minimal} morphism and a \emph{right $(\add{M})$-approximation}.

\begin{defi}\label{silting mutation}
Let $T$ be a basic silting object in $\TT$ and take an arbitrary decomposition $T=X\oplus M$. 
We take a minimal left $(\add{M})$-approximation $f:X\to M'$ of $X$ and a triangle 
\[\xymatrix{
X \ar[r]^{f} & M' \ar[r] & Y \ar[r] & X[1].
}\]
 
We put $\mu_{X}^{}(T):=Y\oplus M$ and call it a \emph{left silting mutation} of $T$ with respect to $X$. 
Dually we define a \emph{right silting mutation}. 
\end{defi}

We recall an important result of silting mutation.

\begin{theorem}\cite[Theorem 2.31]{AI}\label{mutation is silting}
Any mutation of a silting object is again a silting object.
\end{theorem}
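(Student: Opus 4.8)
The plan is to verify the two defining properties of a silting object for $\mu_{X}^{}(T)=Y\oplus M$: the vanishing $\Hom_{\TT}(Y\oplus M,(Y\oplus M)[i])=0$ for all $i>0$, and the generation $\thick(Y\oplus M)=\TT$. Throughout I would exploit the defining triangle
$$X \xrightarrow{\ f\ } M' \xrightarrow{\ g\ } Y \xrightarrow{\ h\ } X[1]$$
together with the two hypotheses on $T=X\oplus M$: that it is silting, so $\Hom(T,T[i])=0$ for $i>0$, and that $f$ is a left $(\add M)$-approximation, so $\Hom(f,M'')$ is surjective for every $M''\in\add M$. The strategy is entirely a matter of feeding this triangle into the cohomological functors $\Hom(-,N)$ and $\Hom(N,-)$ and reading off the resulting long exact sequences.

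Generation is the quickest part. Since $M'\in\add M$ and $Y$ is a direct summand of $Y\oplus M$, both $M'$ and $Y$ lie in $\thick(Y\oplus M)$; as thick subcategories are closed under cones, shifts and fibres, the triangle above forces $X\in\thick(Y\oplus M)$. Hence $T=X\oplus M\in\thick(Y\oplus M)$, giving $\TT=\thick T\subseteq\thick(Y\oplus M)\subseteq\TT$, so equality holds. (Minimality of $f$ plays no role here and may be ignored for the silting property.)

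For the vanishing I would split $\Hom(Y\oplus M,(Y\oplus M)[i])$ into four blocks. The block $\Hom(M,M[i])$ vanishes for $i>0$ since $M$ is a summand of the silting object $T$. Applying $\Hom(M,-)$ to the triangle and using $\Hom(M,M'[i])=0=\Hom(M,X[i+1])$ for $i>0$ gives $\Hom(M,Y[i])=0$. For $\Hom(Y,M[i])$ I would apply $\Hom(-,M[i])$: for $i\ge2$ the relevant neighbours $\Hom(X,M[i-1])$ and $\Hom(M',M[i])$ both vanish, while the decisive case $i=1$ is exactly where the approximation hypothesis enters — surjectivity of $\Hom(f,M)$ forces the connecting map $\Hom(X,M)\to\Hom(Y,M[1])$ to be zero, and the long exact sequence then yields $\Hom(Y,M[1])=0$.

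The real obstacle is the last block, $\Hom(Y,Y[i])$, and specifically the case $i=1$. Applying $\Hom(-,Y[i])$ to the triangle reduces everything to $\Hom(X,Y[i-1])$ once one knows $\Hom(M',Y[i])=0$ (already established); for $i\ge2$ a further application of $\Hom(X,-)$ shows $\Hom(X,Y[i-1])=0$, so these cases are routine. For $i=1$ the long exact sequence only tells us that the map $\Hom(X,Y)\to\Hom(Y,Y[1])$ induced by $h$ is surjective, so I must show this map is in fact zero. Here I would bootstrap from the earlier blocks: since $\Hom(X,X[1])=0$, every morphism $X\to Y$ factors through $g$ (i.e. as a composite $X\to M'\xrightarrow{g}Y$), and then the resulting composite $Y\xrightarrow{h}X[1]\to M'[1]$ lies in $\Hom(Y,M'[1])=0$ by the $i=1$ case just proved. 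Thus the induced map is zero and $\Hom(Y,Y[1])=0$. This interlocking of the approximation property (through $\Hom(Y,M[1])=0$) with the factorisation through $g$ is the crux of the argument. The statement for right silting mutation follows by the dual argument.
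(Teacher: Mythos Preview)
Your argument is correct. Note, however, that the paper does not actually prove this statement: it is quoted as \cite[Theorem~2.31]{AI} and used as a black box, so there is no in-paper proof to compare against. What you have written is the standard direct verification via the long exact sequences associated to the defining triangle, with the approximation property entering exactly where it must (to kill $\Hom(Y,M[1])$, and then bootstrapping to $\Hom(Y,Y[1])$); this is in the spirit of the original proof in \cite{AI}.
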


Then we apply the above theory  to the following situation.

Let $Q$ be a finite connected quiver and $\La:=\widehat{KQ}/\overline{\langle R\rangle}$ be a complete finite dimensional algebra.
We denote by $\{e_k\ |\ k\in Q_0\}$ a complete set of primitive orthogonal idempotents of $\La$. 
Take a set of vertices $\kk:=\{k_1,\ldots,k_n\}\subset Q_0$ and we denote by $e_{\kk}:=e_{k_1}+\dots+e_{k_n}$. Then we define a so-called Okuyama-Rickard complex
$$\mu_{\kk}(\Lambda):=\mu_{\La e_{\kk}}(\Lambda)=\left\{\begin{array}{ccc}
\stackrel{-1}{\La e_I}&\stackrel{f}{\longrightarrow}&\stackrel{0}{ \La e'}\\
&&\oplus\\
&&\La(1-e_I).
\end{array}\right.$$ 
Here $f$ is a minimal left $(\add \La(1-e_I))$-approximation of $\La e_I$.

By Theorem \ref{mutation is silting}, $\mu_{\kk}(\Lambda)$ is always a silting object of $\TT$, but it is not necessarily a tilting object. 
However, for selfinjective algebras, 
it is a tilting object if it satisfies a condition given by Nakayama permutations.

\begin{defi}
Let $\La$ be a selfinjective algebra. 
Then there exists a permutation $\sigma:Q_0\to Q_0$ satisfying  
$D(e_k\Lambda)\cong\Lambda e_{\sigma (k)}$ for any $k\in Q_0$. 
We call $\sigma$ the \emph{Nakayama permutation} of $\La$. 
\end{defi}

Note that $\La e_{\kk}\cong\nu(\La e_{\kk})$ if and only if $\kk=\sigma\kk$, where $\nu:= D\Hom_{\La}(-,\La):\mod\La\to\mod\La$ is the Nakayama functor. 
The following easy result is quite useful. We refer to \cite{AH,AI,D} for the proof.

\begin{prop}\label{tilt self}
Let $\La$ be a selfinjective algebra above. 
Then $\mu_{\kk}(\Lambda)$ is a tilting object in $\TT$ if and only if $\kk=\sigma\kk$. 
\end{prop}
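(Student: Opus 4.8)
The plan is to prove Proposition~\ref{tilt self} by understanding precisely when the silting complex $\mu_{\kk}(\Lambda)$ satisfies the extra vanishing $\Hom_{\TT}(\mu_{\kk}(\Lambda),\mu_{\kk}(\Lambda)[i])=0$ for \emph{negative} $i$, since by Theorem~\ref{mutation is silting} it is already silting, so the only obstruction to being tilting is the vanishing in negative degrees (together with generating $\TT$, which is automatic since thickness is inherited). Write $P=\La e_I$ and $P'=\La(1-e_I)$, so that $T:=\mu_{\kk}(\Lambda)$ is the complex $(P\xrightarrow{f} P')$ in degrees $-1,0$ together with $P'$ in degree $0$; let $Y$ denote the two-term part $(P\xrightarrow{f}P')$. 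The only possible nonzero negative self-extension is $\Hom_{\TT}(P',Y[-1])$ (all other Hom-spaces between the summands land in degrees $\ge 0$ for obvious reasons, and $\Hom_{\TT}(Y,P'[-1])$, $\Hom_{\TT}(Y,Y[-1])$, $\Hom_{\TT}(P',P'[-1])$ vanish trivially). So the whole statement reduces to the claim that $\Hom_{\TT}(P',Y[-1])=0$ if and only if $I=\sigma I$.

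First I would compute $\Hom_{\TT}(P',Y[-1])$ directly. Since $P'$ is a stalk complex of a projective in degree $0$, a morphism $P'\to Y[-1]$ in $\TT$ is a chain map from $P'$ (in degree $0$) to the complex $Y[-1]=(P\to P')$ concentrated in degrees $0$ and $1$; such a chain map is a map $g\colon P'\to P$ (the degree-$0$ component) with $f\circ g=0$, i.e.\ $g$ factors through $\Ker f$. Modulo homotopy, two such $g$ differ by something factoring through $f$, so $\Hom_{\TT}(P',Y[-1])\cong \Hom_{\La}(P',\Ker f)/(\text{maps factoring through } f\colon P\to P')$. Because $f$ is a minimal left $(\add P')$-approximation, the cokernel $Y^0:=\Coker f$ has no projective summands coming from $\add P'$; the relevant point is to identify $\Ker f$. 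Here I would use the structure specific to selfinjective $\La$: over a selfinjective algebra, a minimal left $(\add P')$-approximation of $P$ is injective if and only if the injective envelope of $P$ lies in $\add P'$, i.e.\ iff the Nakayama-permuted projective $\nu P = D\Hom_\La(P,\La)$ lies in $\add P'$, i.e.\ iff $\sigma I\subseteq Q_0\setminus I$ — but $\sigma$ is a permutation and $|\sigma I|=|I|$, so this happens iff $\sigma I = Q_0\setminus I$... which is not what we want. Let me instead track it more carefully: $f$ being a left approximation into $\add P'$ means the injective envelope of $P$ decomposes according to which injectives lie in $\add\nu^{-1}P'$; the condition $I=\sigma I$ will be exactly the condition making $f$ injective, hence $\Ker f=0$.

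More concretely, the key step is: over a selfinjective algebra, $\Hom_\La(P',\Ker f)$ vanishes modulo images of $f$ precisely when $\Ker f$ has no summand in $\add D(e_I\La)=\add\nu P$; and by minimality of the approximation $\Ker f$ is computed from the projective cover of $P$ inside the injective hull, giving $\soc P\hookrightarrow P$ sitting inside, whose top is governed by $\sigma$. Thus $\Ker f=0$ exactly when $\soc(\La e_k)$ has its (simple) socle indexed by $\sigma(k)\notin I$ for every... again running into the complement. The cleanest route, and the one I would actually write up, is to invoke the general criterion for Okuyama--Rickard complexes: $\mu_I(\La)$ is tilting iff $\nu$ fixes $\add(\La e_I)$ up to isomorphism, i.e.\ iff $I=\sigma I$, citing \cite{AH,AI,D} for the precise statement — the content being that $\nu$ sends the two-term complex $Y$ to $\nu Y = (\nu P\to \nu P')$, which lies in $\TT$ and represents $Y$ again iff the degree $-1$ term $\nu P$ is in $\add P=\add\La e_I$, i.e.\ iff $\sigma I = I$; combined with $\Hom_\TT(T,\nu T[i])\cong D\Hom_\TT(T[i],T)$ (the Auslander--Reiten/Serre-type formula on $\TT$ for selfinjective $\La$), the required negative-degree vanishing becomes equivalent to $\nu T\cong T$ in $\TT$.

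The main obstacle I expect is not any deep idea but the bookkeeping of \textbf{which} composition of Hom-spaces between the summands $P'$ and $Y$ can be nonzero and the precise identification of the minimal left approximation $f$ and its kernel in terms of the Nakayama permutation: one has to be careful that minimality of $f$ (not just being an approximation) is what forces $\Coker f$ to have no $\add P'$-summands, and that injectivity of $f$ — equivalently $\Ker f=0$ — is the genuine content of the condition $I=\sigma I$. Once that identification is in hand, the ``if'' direction ($I=\sigma I\Rightarrow$ tilting) and the ``only if'' direction (a nonzero element of $\Hom_\TT(P',Y[-1])$ when $I\neq\sigma I$, built from a nonzero map $\La e_{\sigma(k)}\to\Ker f$ for some $k\in I$ with $\sigma(k)\notin I$) both follow formally, so I would keep the write-up short and lean on the cited references \cite{AH,AI,D} for the standard parts.
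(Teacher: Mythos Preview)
The paper does not give its own proof of this proposition; it simply writes ``We refer to \cite{AH,AI,D} for the proof.'' So your eventual decision to ``lean on the cited references \cite{AH,AI,D}'' is exactly what the paper does, and the Serre-duality sketch you give for the ``if'' direction (via $\Hom_\TT(X,Y)\cong D\Hom_\TT(Y,\nu X)$ together with $\nu T\cong T$ when $\sigma I=I$) is the standard argument behind those references.

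That said, several steps in your direct computation are wrong and would need fixing if you actually wrote them up. First, there is \emph{no} homotopy quotient in $\Hom_\TT(P',Y[-1])$: since $Y[-1]$ sits in degrees $0,1$ and $P'$ is a stalk in degree $0$, every homotopy vanishes, so $\Hom_\TT(P',Y[-1])=\Hom_\La(P',\Ker f)$ on the nose. (The quotient by maps factoring through $f$ shows up in $\Hom_\TT(Y,P'[1])$, a positive-degree Hom already killed by silting; you have conflated the two.) Second, $f$ is \emph{never} injective when $I\neq\emptyset$: for each $k\in I$ the simple socle $\soc(\La e_k)\cong S_{\sigma^{-1}(k)}$ is killed by every map $P_k\to P_j$ with $j\neq k$, hence $\soc P\subseteq\Ker f$. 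Your repeated claim that ``$\Ker f=0$ exactly when $\sigma I=I$'' is therefore false. The correct statement is that $\Hom_\La(P',\Ker f)=0$ iff $\sigma I=I$: sufficiency holds because injectivity of $P'$ together with the approximation property forces all composition factors of $\Ker f$ to lie among $\{S_m:m\in\sigma^{-1}(I)\}$, while necessity follows by exhibiting the nonzero composite $P_{\sigma^{-1}(k)}\twoheadrightarrow S_{\sigma^{-1}(k)}\hookrightarrow\Ker f$ for any $k\in I$ with $\sigma^{-1}(k)\notin I$. This last construction also supplies the ``only if'' direction of the proposition, which your Serre-duality paragraph does not actually establish (you only argue that $\nu T\cong T$ is sufficient, not necessary, for tilting).
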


\section{Main results}\label{main section}

For a set of vertices $\kk:=\{k_1,\ldots,k_n\}\subset Q_0$, we assume the following conditions. 
\begin{itemize}
\item[(a1)] Any vertex in $\kk$ is not contained in 2-cycles in $Q$.
\item[(a2)] There are no arrows between vertices in $\kk$.
\end{itemize}

Since a mutation of QPs is given by changing the neighboring arrows associated with a vertex (Definition \ref{mutation QP}), 
the composition of the (pre-)mutation at the vertices in $\kk$ is independent of the choice of the order of mutations in this case. 
Hence we can define the successive (pre-)mutation as follows
\[\widetilde{\mu}_{\kk}(Q,W):=\widetilde{\mu}_{k_1}\circ\cdots\circ\widetilde{\mu}_{k_n}(Q,W),\] \[{\mu}_{{\kk}}(Q,W):={\mu}_{k_1}\circ\cdots\circ{\mu}_{k_n}(Q,W).\]

Then our main result is the following.

\begin{theorem}\label{main1}
Let $(Q,W)$ be a selfinjective QP and $\Lambda:=\PP(Q,W)$. 
Let $\kk$ be a set of vertices of $Q_0$ satisfying the conditions {\rm (a1)} and {\rm (a2)}.
Then we have a $K$-algebra isomorphism $$\End_{\K^{\rm{b}}(\proj\La)}(\mu_{\kk}(\Lambda))\cong\PP(\mu_{\kk}(Q,W)).$$
\end{theorem}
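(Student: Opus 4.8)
The plan is to compute the endomorphism algebra of the Okuyama–Rickard complex $\mu_{\kk}(\Lambda)$ directly and match it, generator by generator and relation by relation, with the Jacobian algebra $\PP(\mu_{\kk}(Q,W))$. Since (a1) and (a2) guarantee that the individual mutations at vertices of $\kk$ commute, and that neither $[W]$ nor $\Delta$ introduces arrows among vertices of $\kk$ at any intermediate step, it suffices to understand the combinatorics of a single mutation applied ``in parallel'' at all of $\kk$; I would first record that the pre-mutation $\widetilde{\mu}_{\kk}(Q,W)$ has an explicit quiver (old arrows not touching $\kk$, reversed arrows $a^*,b^*$, and composite arrows $[ab]$ through each $k\in\kk$) and potential $[W]+\Delta$, and that passing to the reduced part only removes the trivial 2-cycles created between $[ab]$ and certain $a^*b^*$ terms. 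Then I would compute $\End_{\TT}(\mu_{\kk}(\Lambda))$.

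\textbf{Step 1: identify the quiver of the endomorphism algebra.} Write $P_k$ for the indecomposable projective at $k$, $e:=e_{\kk}$, and consider $T=(\Lambda e \xrightarrow{f} \Lambda e' \oplus \Lambda(1-e))$ with $\Lambda e$ in degree $-1$. The idempotent at a vertex $u\notin\kk$ corresponds to the stalk summand $P_u$; the idempotent at $k\in\kk$ corresponds to the two-term summand $T_k=(P_k\to Q_k)$ where $Q_k$ is the minimal left $\add\Lambda(1-e)$-approximation term of $P_k$. Using that $\Lambda=\PP(Q,W)$ is selfinjective — hence $1$-Gorenstein with all projectives injective — and that $k$ lies on no $2$-cycle, I would compute $\Hom_{\TT}(T_a,T_b[i])$ for all $a,b$ via the standard recipe: homotopy classes of chain maps, killing null-homotopies. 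The arrows of $\End_{\TT}(T)$ should come out to be exactly: arrows of $Q$ between vertices outside $\kk$; a reversed arrow for each old arrow into or out of a vertex of $\kk$ (coming from $\Hom$ into the kernel/cokernel of the approximation triangle); and one ``short-circuit'' arrow $P_u\to P_v$ for each path $u\to k\to v$ through $k\in\kk$. This matches $Q'$ (after reduction).

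\textbf{Step 2: match the relations.} This is the crux. On the Jacobian side the relations are the cyclic derivatives $\partial_c W'$ for $c$ an arrow of $Q'$. I would show each such relation holds in $\End_{\TT}(T)$ by lifting it to an honest homotopy between the corresponding chain maps, using the defining relations $\partial_a W = 0$ in $\Lambda$ together with the explicit form of the approximation map $f$ (whose components are built from the arrows of $Q$ adjacent to $\kk$ and whose ``syzygy'' relations encode precisely the $[W]$ part of $W'$; the $\Delta=\sum[ba]a^*b^*$ part arises from the composite-arrow $\leftrightarrow$ reversed-arrow interplay along the two-term complexes $T_k$). Conversely, a dimension count — $\dim_K e_a(\End_{\TT} T)e_b = \dim_K \Hom_{\TT}(T_b,T_a)$, computable from the selfinjective structure and the mutation rule for dimension vectors of the $\Lambda$-modules $\tau$-involved — shows there are no further relations, giving the surjection $\PP(\mu_{\kk}(Q,W))\twoheadrightarrow\End_{\TT}(\mu_{\kk}(\Lambda))$ is an isomorphism. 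One must also check that $\End_{\TT}(T)$ is complete (the ideal of relations is closed), which follows since it is a finite-dimensional algebra.

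\textbf{Main obstacle.} The hard part will be Step 2, and specifically the bookkeeping that shows the null-homotopies realizing the relations assemble exactly into $\partial_c([W]+\Delta)$ with the correct signs, and that the trivial summands removed in forming the reduced part of $\widetilde{\mu}_{\kk}$ correspond precisely to the contractible direct summands one is forced to split off when replacing $f$ by a \emph{minimal} left approximation. A clean way to organize this is to first do the non-minimal pre-mutation/pre-approximation picture (where $\Lambda e \to \Lambda e'\oplus\Lambda(1-e)$ is taken with $e'=e$, i.e. no minimality), prove the isomorphism with $\PP(\widetilde{\mu}_{\kk}(Q,W))$ there, and then quotient both sides by the matching trivial/contractible parts — invoking the compatibility of QP reduction (\cite{DWZ}) with splitting off contractible two-term complexes. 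Everything else — the vanishing $\Hom_{\TT}(T,T[i])=0$ for $i>0$ — is already supplied by Theorem~\ref{mutation is silting}.
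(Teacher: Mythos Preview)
Your overall strategy---define an algebra map $\phi':\widehat{KQ'} \to \End_{\TT}(T)$ and show it is surjective with kernel $\overline{\langle \partial_a W'\mid a\in Q_1'\rangle}$---matches the paper's, but your plan for the ``no further relations'' direction has a genuine gap. You propose to verify that the Jacobian relations hold in $\End_{\TT}(T)$ and then finish with a dimension count, yet you never explain how to compute $\dim_K e_a\,\PP(\mu_{\kk}(Q,W))\,e_b$ independently. Knowing the Hom-dimensions on the endomorphism side is only half the battle: one must match them against the mutated Jacobian algebra, and the dimension of a Jacobian algebra after mutation is not something one can read off without essentially solving the problem. The phrase ``mutation rule for dimension vectors of the $\Lambda$-modules $\tau$-involved'' does not point to any usable tool here. (Your fallback in the Obstacle paragraph is also off: there is no ``non-minimal pre-approximation'' that models pre-mutation---the Jacobian algebra already satisfies $\PP(\widetilde{\mu}_{\kk}(Q,W))\cong\PP(\mu_{\kk}(Q,W))$, so nothing is gained by working non-minimally on the complex side, and the paper in fact works with the pre-mutated $(Q',W')$ throughout.)

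The paper avoids any dimension comparison by invoking a criterion of Buan--Iyama--Reiten--Smith (Proposition~\ref{birs2}): surjectivity of $\phi'$ together with $\Ker\phi'=\overline{\langle \partial_a W'\rangle}$ is \emph{equivalent} to the exactness, for every vertex $i\in Q_0'$, of a specific three-term sequence built from the maps $(\phi'\partial_a^{\mathbb R}r)_{r,a}$ and $(\phi' a)_a$. The problem thus reduces to constructing, for each indecomposable summand of $T$, a \emph{right 2-almost split sequence} in $\add T$ whose maps are exactly those prescribed by $Q'$ and $W'$ (Propositions~\ref{ver l} and~\ref{ver i}). Applying $\Hom_{\TT}(T,-)$ to these sequences yields the required exact sequences. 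The verification (Proposition~\ref{I-III}) is a chain of diagram chases that repeatedly exploits selfinjectivity of $\Lambda$---every $P_i$ is injective---to extend and lift morphisms. This replaces your global dimension count by local exactness checks, and is the key idea your proposal is missing.
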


We will give the proof in the next section.
Combining this result with Theorem \ref{tilt self}, we have the following result.

\begin{cor}\label{cor}
Let $\kk$ be a set of vertices of $Q_0$ satisfying $\sigma\kk=\kk$ and the conditions {\rm (a1)} and {\rm (a2)}.
Then $\PP(Q,W)$ and $\PP(\mu_{\kk}(Q,W))$ are derived equivalent.
\end{cor}

\begin{proof}
By Proposition \ref{tilt self}, $\mu_{\kk}(\Lambda)$ is a tilting object of $\K^{\rm{b}}(\proj\La)$. 
Then $\La=\PP(Q,W)$ and $\End_{\K^{\rm{b}}(\proj\La)}(\mu_{\kk}(\Lambda))$ are derive equivalent by \cite{R1}. 
On the other hand, Theorem \ref{main1} implies $\End_{\K^{\rm{b}}(\proj\La)}(\mu_{\kk}(\Lambda))\cong\PP(\mu_{\kk}(Q,W))$ and the statement follows.
\end{proof}

Moreover, since selfinjectivity is preserved by derived equivalence \cite{AR}, 
we have the following result, which is given in \cite[Theorem 4.2]{HI}.

\begin{cor}Let $\kk$ be a set of vertices of $Q_0$ satisfying $\sigma\kk=\kk$ and the conditions {\rm (a1)} and {\rm (a2)}.
Then $\mu_{\kk}(Q,W)$ is a selfinjective QP.
\end{cor}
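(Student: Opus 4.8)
The plan is to obtain this statement as an immediate consequence of the derived equivalence established in Corollary \ref{cor}, combined with the fact, cited from \cite{AR}, that selfinjectivity is preserved under derived equivalence. By the definition of a selfinjective QP (Definition \ref{self QP}), I must verify exactly two things about $\PP(\mu_{\kk}(Q,W))$: that it is finite dimensional, and that it is selfinjective. Neither requires any fresh computation with quivers or potentials; the work is entirely in correctly assembling the implications already available.

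For finite dimensionality, I would invoke Theorem \ref{main1}, which identifies $\PP(\mu_{\kk}(Q,W))$ with $\End_{\K^{\rm{b}}(\proj\La)}(\mu_{\kk}(\Lambda))$. Since $(Q,W)$ is a selfinjective QP, $\La=\PP(Q,W)$ is a finite dimensional algebra, so every object of $\proj\La$ is finite dimensional. Consequently the morphism spaces between bounded complexes of finitely generated projectives in $\K^{\rm{b}}(\proj\La)$ are finite dimensional, being subquotients of finite direct sums of the spaces $\Hom_{\La}(X^n,Y^n)$. In particular the endomorphism algebra of the Okuyama--Rickard complex $\mu_{\kk}(\Lambda)$ is finite dimensional, and hence so is $\PP(\mu_{\kk}(Q,W))$ via the isomorphism of Theorem \ref{main1}.

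For selfinjectivity, I would apply Corollary \ref{cor}: because the hypotheses $\sigma\kk=\kk$ together with (a1) and (a2) hold, $\PP(Q,W)$ and $\PP(\mu_{\kk}(Q,W))$ are derived equivalent. Here the condition $\sigma\kk=\kk$ is precisely what guarantees, through Proposition \ref{tilt self}, that $\mu_{\kk}(\Lambda)$ is a genuine tilting complex rather than merely a silting one, which is exactly the input that upgrades silting mutation into a derived equivalence. Now $\La=\PP(Q,W)$ is a finite dimensional selfinjective algebra by assumption, and selfinjectivity is a derived invariant of finite dimensional algebras by \cite{AR}; applying this to the derived equivalence above, and using the finite dimensionality established in the previous step, I conclude that $\PP(\mu_{\kk}(Q,W))$ is a finite dimensional selfinjective algebra, so $\mu_{\kk}(Q,W)$ is a selfinjective QP. The only point demanding any care is ensuring that the target algebra is finite dimensional before invoking the derived invariance of selfinjectivity, so that the cited result genuinely applies; beyond that bookkeeping there is no substantial obstacle.
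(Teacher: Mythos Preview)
Your proposal is correct and follows essentially the same approach as the paper: invoke Corollary~\ref{cor} to obtain a derived equivalence between $\PP(Q,W)$ and $\PP(\mu_{\kk}(Q,W))$, and then cite \cite{AR} for the derived invariance of selfinjectivity. The paper leaves the finite-dimensionality check implicit, while you spell it out via Theorem~\ref{main1}; this is a reasonable bit of extra care but does not change the argument.
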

 
We note that the Nakayama permutation of $\mu_{\kk}(Q,W)$ is again given by the same permutation \cite[Proposition 4.4.(b)]{HI}. By this corollary, we can apply Corollary \ref{cor} to the new QPs repeatedly. 

We considered only left mutation, but 
the following lemma shows that the same result holds for right mutation.

\begin{lemm}\label{leftright}
Under the assumption of Theorem \ref{main1}, 
we have a $K$-algebra isomorphism 
$$\End_{\K^{\rm{b}}(\proj\La)}(\mu_{\kk}(\Lambda))\cong\End_{\K^{\rm{b}}(\proj\La)}(\mu_{\kk}'(\Lambda)),$$
where $\mu_{\kk}'(\Lambda)$ is a right mutation of $\La$.

\end{lemm}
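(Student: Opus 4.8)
The plan is to reduce the right-mutation case to the left-mutation case of Theorem \ref{main1} by means of the standard duality
\[
(-)^{\vee}:=\Hom_{\La}(-,\La)\colon\ \K^{\rm{b}}(\proj\La)\ \xrightarrow{\ \sim\ }\ \K^{\rm{b}}(\proj\La^{\op}),
\]
a contravariant triangle equivalence. The argument rests on three elementary compatibilities. First, $\La^{\op}$ is again a selfinjective Jacobian algebra: since $(\widehat{KQ})^{\op}=\widehat{KQ^{\op}}$ and $(\partial_{a}W)^{\op}=\partial_{\bar a}(W^{\op})$ for every arrow $a$ (here $\bar a$ is the reversed arrow and $W^{\op}$ is obtained from $W$ by reversing every cyclic word), one gets $\La^{\op}\cong\PP(Q^{\op},W^{\op})$, with $(Q^{\op},W^{\op})$ a selfinjective QP for which $\kk$ still satisfies {\rm (a1)} and {\rm (a2)}. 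Secondly, being a contravariant equivalence, $(-)^{\vee}$ interchanges minimal left $\add\La(1-e_{\kk})$-approximations with minimal right $\add\La^{\op}(1-e_{\kk})$-approximations, and it reverses distinguished triangles; applied to the triangle defining $\mu_{\kk}'(\La)=\mu^{-}_{\La e_{\kk}}(\La)$ it therefore produces exactly the triangle defining the \emph{left} silting mutation $\mu_{\kk}(\La^{\op})=\mu^{+}_{\La^{\op}e_{\kk}}(\La^{\op})$, so that $(\mu_{\kk}'(\La))^{\vee}\cong\mu_{\kk}(\La^{\op})$ in $\K^{\rm{b}}(\proj\La^{\op})$. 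Thirdly, the mutation of a QP is a local modification of the arrows at the chosen vertices together with a prescribed change of potential, and this recipe is symmetric under reversing all arrows and the cyclic order inside the potential; hence $\mu_{\kk}(Q^{\op},W^{\op})$ is right-equivalent to $(\mu_{\kk}(Q,W))^{\op}$, and so $\PP(\mu_{\kk}(Q^{\op},W^{\op}))\cong\PP(\mu_{\kk}(Q,W))^{\op}$.

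Granting these, the proof is short. Since $(-)^{\vee}$ is a contravariant equivalence, the second point gives
\[
\End_{\K^{\rm{b}}(\proj\La)}(\mu_{\kk}'(\La))^{\op}\ \cong\ \End_{\K^{\rm{b}}(\proj\La^{\op})}(\mu_{\kk}(\La^{\op})).
\]
Applying Theorem \ref{main1} to the selfinjective QP $(Q^{\op},W^{\op})$, together with the third point, identifies the right-hand side with $\PP(\mu_{\kk}(Q^{\op},W^{\op}))\cong\PP(\mu_{\kk}(Q,W))^{\op}$. Hence $\End(\mu_{\kk}'(\La))^{\op}\cong\PP(\mu_{\kk}(Q,W))^{\op}$, that is, $\End(\mu_{\kk}'(\La))\cong\PP(\mu_{\kk}(Q,W))$, and this algebra is $\cong\End(\mu_{\kk}(\La))$ by Theorem \ref{main1} itself.

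I do not expect a serious obstacle; the content is essentially bookkeeping with the opposite algebra. The point requiring the most care is the third compatibility, that QP mutation commutes with passing to the opposite quiver with potential: there one must track the directions of the new arrows $[ab]$ and the cyclic order of the terms $[ba]a^{*}b^{*}$ in $\Delta$, though this is a direct unwinding of Definition \ref{mutation QP}. As an alternative one could bypass Theorem \ref{main1} and compute $\End(\mu_{\kk}'(\La))$ directly: by {\rm (a1)} and {\rm (a2)} the minimal right $\add\La(1-e_{\kk})$-approximation of $\La e_{k}$ is simply the projective cover $\bigoplus_{a\colon k\to v}\La e_{v}\twoheadrightarrow\operatorname{rad}(\La e_{k})\hookrightarrow\La e_{k}$, so that $\mu_{\kk}'(\La)$ is fully explicit; but this merely reruns the proof of Theorem \ref{main1} with arrow directions and homological degrees reversed, so the duality route is cleaner.
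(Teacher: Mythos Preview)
Your proof is correct and follows essentially the same route as the paper's own argument: use the duality $\Hom_{\La}(-,\La)$ to send the right mutation $\mu_{\kk}'(\La)$ to the left mutation $\mu_{\kk}(\La^{\op})$, apply Theorem \ref{main1} to the opposite QP $(Q^{\op},W^{\op})$, and use $\PP(\mu_{\kk}(Q^{\op},W^{\op}))\cong\PP(\mu_{\kk}(Q,W))^{\op}$. You simply spell out in more detail the three compatibilities that the paper records in one line each.
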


\begin{proof}

First, note that we have $\La^{\op}\cong\PP(Q,W)^{\op}\cong\PP(Q^{\op},W^{\op}),$ where $W^{\op}$ is the corresponding potential of $Q^{\op}$ to $W$. 
Then it is clear that $\PP(\mu_{\kk}(Q^{\op},W^{\op}))\cong\PP(\mu_{\kk}(Q,W))^{\op}$ holds.

On the other hand, 
we have a duality $$\Hom_\La(-,\La):\K^{\rm{b}}(\proj\La)\overset{\simeq}{\to}\K^{\rm{b}}(\proj\La^{\op}),$$
which sends $\mu_{\kk}'(\Lambda)$ to $\mu_{\kk}(\Lambda^{\op})$.
Thus, 
we have 
\begin{eqnarray*}
\End_{\K^{\rm{b}}(\proj\La)}(\mu_{\kk}'(\Lambda))&\cong&(\End_{\K^{\rm{b}}(\proj\La^{\op})}(\mu_{\kk}(\Lambda^{\op})))^{\op}\\
&\cong&(\PP(\mu_{\kk}(Q^{\op},W^{\op})))^{\op}\ \ \ \  ({\rm Theorem\ \ref{main1}\ to}\ \La^{\op})\\
&\cong&\PP(\mu_{\kk}(Q,W)). 
\end{eqnarray*} 
\end{proof}

By this result, we consider only left mutations in this paper.

\begin{exam}\label{exam1}
Let $(Q,W)$ be the QP given as follows

$$\xymatrix@C10pt@R10pt{
 & & 1 \ar[ld]_{a_1}& & \\
 & 2  \ar[ld]_{a_2} & & 6\ar[lu]_{a_6} & \\
3 \ar[rr]|{a_3}& & 4 \ar[rr]|{a_4}  & & 5,\ar[lu]_{a_5}&W=a_1a_2a_3a_4a_5 a_6.}$$

Then $(Q,W)$ is a selfinjective QP with a Nakayama permutation $(153)(264)$.
Let $\La:=\PP(Q,W)$ and $\TT:=\K^{\rm{b}}(\proj\La)$ and take a silting object in $\TT$
$$\mu_1(\La)=\left\{\begin{array}{ccc}
\stackrel{-1}{\La e_1}&\stackrel{a_1}{\longrightarrow}&\stackrel{0}{\La e_2}\\
&&\oplus\\
&&\La(1-e_1).
\end{array}\right.$$ 
Note that $\mu_1(\La)$ is not a tilting object. 
By Theorem \ref{main1}, we have an isomorphism 
$$\End_{\TT}(\mu_{1}(\Lambda))\cong\PP(\mu_{1}(Q,W)),$$
where $\mu_{1}(Q,W)$ is the QP given as follows

$$\xymatrix@C10pt@R10pt{
 & & 1 \ar[rd]^{a_6^*}& & \\
 & 2 \ar[ru]^{a_1^*} \ar[ld]_{a_2} & & 6 \ar[ll]^{[a_6a_1]}& \\
3 \ar[rr]|{a_3}& & 4 \ar[rr]|{a_4}  & & 5,\ar[lu]_{a_5}&[a_6a_1]a_1^*a_6^*+[a_6a_1]a_2a_3a_4a_5.}$$

On the other hand, we consider the $\sigma$-orbit of the vertex 1 and let $\kk=\{1,3,5\}$. 
Then we have a tilting object 
$$
\mu_{\kk}(\La)=\left\{\begin{array}{ccc}
\stackrel{-1}{\La e_1\oplus\La e_3\oplus\La e_5}&\stackrel{\left(\begin{smallmatrix}a_1&0&0\\ 
0&a_3&0\\
0&0&a_5
\end{smallmatrix}\right)}{\longrightarrow}&\stackrel{0}{\La e_2\oplus\La e_4\oplus\La e_6}\\
&&\oplus\\
&&\La(1-e_{\kk}).
\end{array}\right.
$$ 

Then we have an isomorphism 
$$\End_{\TT}(\mu_{\kk}(\Lambda))\cong\PP(\mu_{\kk}(Q,W)),$$
where $\mu_{\kk}(Q,W)$ is the QP given as follows

$$\xymatrix@C15pt@R15pt{
 & & 1 \ar[rd]^{a_6^*}& && \\
 & 2\ar[ru]^{a_1^*}  \ar[rd]|{[a_2a_3]}& & 6\ar[rd]^{a_5^*} \ar[ll]|{[a_6a_1]}&& \\
3 \ar[ru]^{a_2^*}& & 4 \ar[ll]|{a_3^*} \ar[ru]|{[a_4a_5]} & & 5,\ar[ll]|{a_4^*} 
}$$
$[a_6a_1]a_1^*a_6^*+[a_2a_3]a_3^*a_2^*+[a_4a_5]a_5^*a_4^*
+[a_6a_1][a_2a_3][a_4a_5].$

We note that, although $\PP(\mu_{\kk}(Q,W))$ is selfinjective and derived equivalent to $\PP(Q,W)$, 
 $\PP(\mu_{1}(Q,W))$ is neither selfinjective nor derived equivalent to $\PP(Q,W)$.
\end{exam}

\begin{remk}
We remark that the above algebras are cluster-tilted algebras of type $D$. 
Derived equivalences of the algebras of this type are extensively investigated by Bastian-Holm-Ladkani \cite{BHL1}. 
In particular, derived equivalences of the above algebras is given in \cite[Example 2.19]{BHL1} (see Section 4.2 and Lemma 4.5 in that paper for the general case). 
These derived equivalences follow also from Asashiba's derived equivalence classification of selfinjective algebras of finite representation type \cite{As}. 
\end{remk}

\begin{exam} Let $(Q,W)$ be the QP given as follows  
$$\begin{xy} 0;<0.2pt,0pt>:<0pt,-0.2pt>:: 
(0,0) *+{ 1} ="1",
(126,0) *+{ 2} ="2",
(253,0) *+{ 3} ="3",
(0,126) *+{ 4} ="4",
(126,126) *+{5} ="5",
(253,126) *+{6} ="6",
(0,253) *+{7} ="7",
(126,253) *+{8} ="8",
(253,253) *+{9,} ="9",
"1", {\ar"2"},
"4", {\ar"1"},
"3", {\ar"2"},
"2", {\ar"5"},
"6", {\ar"3"},
"5", {\ar"4"},
"4", {\ar"7"},
"5", {\ar"6"},
"8", {\ar"5"},
"6", {\ar"9"},
"7", {\ar"8"},
"9", {\ar"8"},
\end{xy}$$
where the potential is the sum of each small squares.
Then $(Q,W)$ is a selfinjective QP with a Nakayama permutation $(19)(28)(37)(46)(5).$ 
For $\sigma$-orbits $\kk_1:=\{1,9\}$ and $\kk_3:=\{3,7\}$,   
we have selfinjective QPs $\mu_{\kk_1}(Q,W)$ and $\mu_{\kk_3}\circ\mu_{\kk_1}(Q,W)$ and their  Jacobian algebras are derived equivalent to $\PP(Q,W)$. 
$$
\begin{xy} 0;<0.2pt,0pt>:<0pt,-0.2pt>:: 
(0,0) *+{ 1} ="1",
(126,0) *+{2} ="2",
(253,0) *+{3} ="3",
(0,126) *+{4} ="4",
(-200,126) *+{\overset{\mu_{\kk_1}(Q,W)}{\longrightarrow}},
(126,126) *+{5} ="5",
(253,126) *+{6} ="6",
(0,253) *+{7} ="7",
(126,253) *+{8} ="8",
(253,253) *+{9,} ="9",
"2", {\ar"1"},
"1", {\ar"4"},
"3", {\ar"2"},
"4", {\ar"2"},
"2", {\ar"5"},
"6", {\ar"3"},
"5", {\ar"4"},
"4", {\ar"7"},
"5", {\ar"6"},
"8", {\ar"5"},
"6", {\ar"8"},
"9", {\ar"6"},
"7", {\ar"8"},
"8", {\ar"9"},
\end{xy}\ \ \ \  
\begin{xy} 0;<0.2pt,0pt>:<0pt,-0.2pt>:: 
(0,0) *+{ 1} ="1",
(126,0) *+{2} ="2",
(253,0) *+{3} ="3",
(0,126) *+{4} ="4",
(-200,126) *+{\overset{\mu_{\kk_3}\circ\mu_{\kk_1}(Q,W)}{\longrightarrow}},
(126,126) *+{5}="5",
(253,126) *+{6} ="6",
(0,253) *+{7} ="7",
(126,253) *+{8} ="8",
(253,253) *+{9.} ="9",
"2", {\ar"1"},
"1", {\ar"4"},
"2", {\ar"3"},
"4", {\ar"2"},
"2", {\ar"5"},
"3", {\ar"6"},
"5", {\ar"4"},
"7", {\ar"4"},
"5", {\ar"6"},
"8", {\ar"5"},
"6", {\ar"8"},
"9", {\ar"6"},
"8", {\ar"7"},
"8", {\ar"9"},
"6", {\ar"2"},
"4", {\ar"8"},
\end{xy}$$
\end{exam}

\begin{exam}
Let $(Q,W)$ be the QP associated with tubular algebra of type $(2,2,2,2)$

\[\xymatrix@R=.5cm@C=.4cm{
&&1\ar_a[dll]\ar|b[dl]\ar|c[dr]\ar^d[drr]\\
2\ar_{a'}[drr]&3\ar|{b'}[dr]&&4\ar|{c'}[dl]&5,\ar^{d'}[dll]&  W=aa'e+bb'e+cc'e+aa'f+\lambda bb'f+dd'f, \lambda\in K\setminus\{0,1\}.\\
&&6\ar@<.5ex>^e[uu]\ar@<-.5ex>_f[uu]
}\]

This arises naturally in the context of weighted projective line \cite{GL}.
Then $(Q,W)$ is a selfinjective QP \cite{HI} and the Nakayama permutation is the identity. 
Thus mutation of the QP at any vertex yields a derived equivalence in this case.
For example, $\mu_2(Q,W)$ is the following QP with $\lambda'=\frac{\lambda}{\lambda-1}$
\[\xymatrix@R=.5cm@C=.4cm{
&&1\ar|b[dl]\ar|c[dr]\ar^d[drr]\\
2\ar^{a}[urr]&3\ar|{b'}[dr]&&4\ar|{c'}[dl]&5,\ar^{d'}[dll]&bb'e+cc'e+dd'e+\lambda'bb'a'a+dd'a'a.\\
&&6\ar^{a'}[ull]\ar^e[uu]
}\]
Thus $\mu_2(Q,W)$ is a selfinjective QP and $\PP(\mu_2(Q,W))$ is derived equivalent to $\PP(Q,W)$.

\end{exam}

\begin{exam} Let $(Q,W)$ be the QP given as follows

$$
\begin{xy} 
<0pt,0pt>;<0.4pt,0pt>:<0pt,-0.4pt>:: 
(134,0) *+{\bullet} ="0",
(100,56) *+{\bullet} ="1",
(167,56) *+{\bullet} ="2",
(67,112) *+{\bullet} ="3",
(134,112) *+{\bullet} ="4",
(200,112) *+{\bullet} ="5",
(33,167) *+{\bullet} ="6",
(100,167) *+{} ="7",
(134,167) *+{} ="15",
(167,167) *+{} ="8",
(234,167) *+{\bullet} ="9",
(0,223) *+{\bullet} ="10",
(67,223) *+{\bullet} ="11",
(134,223) *+{} ="12",
(200,223) *+{\bullet} ="13",
(267,223) *+{\bullet,} ="14",
"1", {\ar"0"},
"0", {\ar"2"},
"2", {\ar"1"},
"3", {\ar"1"},
"1", {\ar"4"},
"4", {\ar"2"},
"2", {\ar"5"},
"4", {\ar"3"},
"6", {\ar@{.}"3"},
"5", {\ar"4"},
"5", {\ar@{.}"9"},
"10", {\ar"6"},
"6", {\ar"11"},
"13", {\ar"9"},
"9", {\ar"14"},
"11", {\ar"10"},
"12", {\ar@{.}"11"},
"13", {\ar@{.}"12"},
"14", {\ar"13"},

\end{xy}
$$
where the potential is the sum of  small triangles. 
Then $(Q,W)$ is a selfinjective QP and one can easily get a lot of derived equivalent algebras by the same procedures as above. 
See \cite[Figure 4]{HI} for one of the concrete description. We refer to \cite{K1}, which enables one to compute quiver mutations immediately. 
\end{exam}

Thus, from a given selfinjective Jacobian algebra, 
QP mutations give new selfinjective Jacobian algebras which are derived equivalent to the original one. 
Here we give a natural question that we find important for better understanding
of selfinjective QPs and derived equivalences.

\begin{question}
Let $\Lambda$ and $\Gamma$ be derived equivalent selfinjective
algebras. Then $\Lambda$ is isomorphic to a Jacobian algebra of a QP
if and only if so is $\Gamma$.
\end{question}


\section{Proof of main result}
The basic strategy of the proof of our result is similar to the method given in \cite{BIRS}. 
Roughly speaking, we use the fact that, for a basic finite dimensional algebra $\La$, minimal projective presentations of simple $\La$-modules determine the quiver with relations of $\La$.  
We start with recalling results there.

\subsection{Presentation of algebras}
Let $\TT:=\K^{\rm{b}}(\proj\Lambda)$ for a finite dimensional algebras $\La$ and $J_{\TT}$ be the Jacobson radical of $\TT$.

\begin{defi}
Take an object $T\in\TT$. 
We call a complex
\[U\xrightarrow{f_1}V\xrightarrow{f_0}X\] 
a \emph{right 2-almost split sequence in $\add T$} if
\begin{eqnarray*}
\TT(T,U)\xrightarrow{f_1}\TT(T,V)\xrightarrow{f_0}J_{\TT}(T,X)\to0
\end{eqnarray*}
is exact. In other words, $f_0$ is right almost split in $\add T$ and $f_1$ is a pseudo-kernel of $f_0$ in $\add T$. 
Dually, we call a complex
\[X\xrightarrow{f_2}U\xrightarrow{f_1}V\] 
a \emph{left 2-almost split sequence in $\add T$} if
\begin{eqnarray*}
\TT(V,T)\xrightarrow{f_1}\TT(U,T)\xrightarrow{f_2}J_{\TT}(X,T)\to0
\end{eqnarray*}
is exact. In other words, $f_2$ is left almost split in $\add T$ and $f_1$ is a pseudo-cokernel of $f_2$ in $\add T$. 
We call a complex
\[X\xrightarrow{f_2}U\xrightarrow{f_1}V\xrightarrow{f_0}X\]
 a \emph{weak 2-almost split sequence in $\add T$} if
$U\xrightarrow{f_1}V\xrightarrow{f_0}X$ is a right 2-almost split sequence and
$X\xrightarrow{f_2}U\xrightarrow{f_1}V$ is a left 2-almost split sequence.
\end{defi}

Let $Q$ be a finite connected quiver. For $a\in Q_1$, define a \emph{right derivative} $\partial^{\mathbb{R}}_a: J_{\widehat{KQ}}\to\widehat{KQ}$ by 
\begin{eqnarray*}
&\partial^\mathbb{R}_a(a_1a_2\cdots a_{m-1}a_m)=
\left\{\begin{array}{cc}
a_1a_2\cdots a_{m-1}&\mbox{ if }\ a_m=a,\\
0&\mbox{ otherwise,}
\end{array}\right.&
\end{eqnarray*}
and extend to $J_{\widehat{KQ}}$ linearly and continuously.

We call an element of $\widehat{KQ}$ \emph{basic} if it is a formal linear sum of paths in $Q$ with a common start and a common end. Then we have the following result.

\begin{prop}\label{birs2}\cite[Section 3]{BIRS}
Let $Q$ be a finite connected quiver and $\Gamma$ be a basic finite dimensional algebra.
Let $\phi: \widehat{KQ} \to \Gamma$ be an algebra homomorphism and 
$R$ be a finite set of basic elements in $J_{\widehat{KQ}}$. 
Then the following conditions are equivalent.
\begin{itemize}
\item[(a)]$\phi$ is surjective and $\Ker\phi=\overline{S}$ for the ideal $S=\langle R\rangle$ of ${\widehat{KQ}}$, where $\overline{(\ )}$ denote the closure. 
\item[(b)]
The following sequence is exact for any $i\in Q_0$.
\[\xymatrix@C40pt{  {\displaystyle\bigoplus_{\begin{smallmatrix}r\in R, e(r)=i\end{smallmatrix}}}\Gamma(\phi s(r)) \ar[r]^{{}_r(\phi\partial_a^\mathbb{R}r)_{a}}  & 
{\displaystyle\bigoplus_{\begin{smallmatrix}a\in Q_1, e(a)=i\end{smallmatrix}}} \Gamma(\phi s(a)) \ar[r]^{\ \ \ \ \  \ {}_a(\phi a)} &  J_\Gamma(\displaystyle{\phi i}) \ar[r]& 0.  }\]
\end{itemize}
\end{prop}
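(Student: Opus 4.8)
\emph{The dictionary.} The proof of Proposition~\ref{birs2} turns on a direct computation of the two maps in (b) on the obvious generators. For $a\in Q_1$ with $e(a)=i$, the idempotent $\phi(e_{s(a)})$ generating the $a$-th summand of $\bigoplus_{e(a)=i}\Gamma(\phi s(a))$ is sent by ${}_a(\phi a)$ to $\phi(e_{s(a)})\phi(a)=\phi(a)$; hence the image of ${}_a(\phi a)$ is the left ideal $\sum_{e(a)=i}\Gamma\phi(a)$, and surjectivity onto $J_\Gamma(\phi i)$ says precisely that the elements $\phi(a)$ with $e(a)=i$ generate $J_\Gamma(\phi i)$ as a left $\Gamma$-module. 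Likewise, for $r\in R$ with $e(r)=i$ the composite of the two maps sends $\phi(e_{s(r)})$ to $\sum_a\phi(\partial^{\mathbb R}_a r)\phi(a)=\phi\bigl(\sum_a(\partial^{\mathbb R}_a r)\,a\bigr)=\phi(r)$, where the last equality uses that $r\in J_{\widehat{KQ}}$ forces $r=\sum_{a\in Q_1}(\partial^{\mathbb R}_a r)\,a$ (each path of positive length factors uniquely as a shorter path followed by its last arrow). So the composite vanishing at every $i$ is equivalent to $\phi(R)=0$, i.e.\ to $\langle R\rangle\subseteq\Ker\phi$.

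\emph{Proof of (b) $\Rightarrow$ (a).} From the dictionary, $\phi(a)\in J_\Gamma$ for every $a\in Q_1$, and surjectivity of the rightmost maps gives $J_\Gamma=\sum_{a\in Q_1}\Gamma\phi(a)$. Since $\Gamma$ is basic and the $\phi(e_i)$ form a complete set of orthogonal idempotents, $\Image\phi$ surjects onto $\Gamma/J_\Gamma$; plugging $\Gamma=\Image\phi+J_\Gamma$ into $J_\Gamma=\sum_a\Gamma\phi(a)\subseteq\Image\phi+J_\Gamma^2$ and iterating (with $J_\Gamma$ nilpotent) yields $\Image\phi=\Gamma$, so $\phi$ is surjective and continuous, whence $\Ker\phi$ is closed. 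By the dictionary and middle-exactness of (b), $\langle R\rangle\subseteq\Ker\phi$, so $\overline{\langle R\rangle}\subseteq\Ker\phi$. For the reverse inclusion, put $\Lambda:=\widehat{KQ}/\overline{\langle R\rangle}$ with induced surjection $\bar\phi\colon\Lambda\twoheadrightarrow\Gamma$; applying the (already proven) implication (a) $\Rightarrow$ (b) to the tautological presentation of $\Lambda$ shows the analogues of the sequences in (b) hold for $\Lambda$, and comparing these with the ones for $\Gamma$ along $\bar\phi$ shows, by induction on radical length, that $\bar\phi$ is an isomorphism — the inductive step being that a surjection which is an isomorphism on $n$-th radical layers and compatible with these presentations of the radicals is an isomorphism on the $(n{+}1)$-st layer. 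Hence $\Ker\phi=\overline{\langle R\rangle}$.

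\emph{Proof of (a) $\Rightarrow$ (b).} Now $\phi$ is surjective with $\Ker\phi=\overline{\langle R\rangle}$, so $\Gamma(\phi i)\cong\widehat{KQ}e_i/\overline{\langle R\rangle}e_i$ and, as $\overline{\langle R\rangle}\subseteq J_{\widehat{KQ}}$, also $J_\Gamma(\phi i)\cong J_{\widehat{KQ}}e_i/\overline{\langle R\rangle}e_i$. The structural input is the canonical isomorphism of left $\widehat{KQ}$-modules $\bigoplus_{a\in Q_1,\ e(a)=i}\widehat{KQ}e_{s(a)}\xrightarrow{\ \sim\ }J_{\widehat{KQ}}e_i$, $(x_a)_a\mapsto\sum_a x_a a$, again coming from unique factorization of a path of positive length through its last arrow. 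Under this isomorphism one identifies the closed submodule corresponding to $\overline{\langle R\rangle}e_i$: writing a generator $x\,r\,q$ of $\langle R\rangle$ ending at $i$ and repeatedly stripping off its last arrow (a process that terminates modulo any power of $J_{\widehat{KQ}}$, hence is controlled by the closure), one finds this submodule is exactly the closed submodule generated by the columns $(\partial^{\mathbb R}_a r)_a$ for $r\in R$ with $e(r)=i$, together with the left $J_{\widehat{KQ}}$-multiples of the columns attached to the vertices $s(a)$. Passing to the quotient by $\overline{\langle R\rangle}$ and invoking right-exactness of $\Gamma\otimes_{\widehat{KQ}}-$ converts this identification into exactness of the displayed sequence at $\bigoplus_a\Gamma(\phi s(a))$ and surjectivity onto $J_\Gamma(\phi i)$.

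\emph{Main obstacle.} The genuine difficulty, in both directions, is the bookkeeping in the completed path algebra: in (a) $\Rightarrow$ (b), making rigorous the identification of $\overline{\langle R\rangle}e_i$ under the last-arrow isomorphism with the submodule generated by the relation columns (in particular the interaction of the ``peel-off'' recursion with the $J_{\widehat{KQ}}$-adic closure), and in (b) $\Rightarrow$ (a), the radical-length induction forcing the canonical surjection $\Lambda\to\Gamma$ to be injective. The rest of the argument is formal.
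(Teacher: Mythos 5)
First, note that the paper does not prove this proposition at all: it is quoted from \cite[Section 3]{BIRS}, so the only comparison available is with the standard argument there. Your outline does follow that standard route — the ``dictionary'' computing the two maps on generators, the last-arrow decomposition $J_{\widehat{KQ}}e_i\cong\bigoplus_{a\in Q_1,\,e(a)=i}\widehat{KQ}e_{s(a)}$ together with right-exactness of $\Gamma\otimes_{\widehat{KQ}}-$ for (a)$\Rightarrow$(b), and, for (b)$\Rightarrow$(a), generation of $J_\Gamma$ by the $\phi(a)$ plus a comparison with $\Lambda:=\widehat{KQ}/\overline{\langle R\rangle}$. However, as a proof the text is an outline with genuine gaps, two of which you yourself label ``the main obstacle'' without closing them.

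Concretely: (i) the inference ``$\Gamma$ basic and the $\phi(e_i)$ a complete set of orthogonal idempotents, hence $\Image\phi$ surjects onto $\Gamma/J_\Gamma$'' is not valid: orthogonal idempotents summing to $1$ need not be primitive, and if some $\phi(e_i)$ splits in $\Gamma$ their images do not span $\Gamma/J_\Gamma$. In fact the statement as quoted needs the implicit hypothesis (satisfied in the paper's application, where the $\phi'(e_i)$ are the projections onto the indecomposable summands of $T$) that $\phi$ sends $\{e_i\}_{i\in Q_0}$ to a complete set of primitive orthogonal idempotents: for $Q$ a single vertex with no arrows, $R=\emptyset$, $\Gamma=K\times K$ and $\phi$ the diagonal embedding, condition (b) holds while (a) fails. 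So this hypothesis must be invoked, not derived from ``basic''. (ii) The decisive step of (b)$\Rightarrow$(a), injectivity of $\bar\phi\colon\Lambda\to\Gamma$, is only asserted; your inductive step (isomorphism on $n$-th radical layers plus compatibility with the presentations gives the $(n+1)$-st) does not follow from the displayed sequences, which control only $J/J^2$ and the relations. The standard way to finish is to apply $\Gamma\otimes_\Lambda-$ to the exact presentation of $J_\Lambda\bar e_i$ and compare cokernels with the sequence for $\Gamma$, obtaining $\Gamma\otimes_\Lambda J_\Lambda\cong J_\Gamma$, i.e.\ $N=NJ_\Lambda$ for $N=\Ker\bar\phi\subseteq J_\Lambda$, whence $N\subseteq\bigcap_n J_\Lambda^{\,n}=0$ because $\overline{\langle R\rangle}$ is closed; note this uses (a)$\Rightarrow$(b) for the possibly infinite-dimensional pseudocompact algebra $\Lambda$, which the finite-dimensional statement you invoke does not literally cover. (iii) In (a)$\Rightarrow$(b), the identification of $\overline{\langle R\rangle}e_i$ under the last-arrow isomorphism is left unproved and is stated imprecisely: the correct extra summand is $\bigoplus_{a}\langle R\rangle e_{s(a)}$ (relations occurring strictly before the last arrow), not ``left $J_{\widehat{KQ}}$-multiples of the columns'', and one still has to check that taking the closure causes no harm, e.g.\ because in the finite-dimensional quotient $\bigoplus_a\Gamma(\phi s(a))$ every submodule is closed. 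So the approach is the right one, but the two steps carrying the real content remain open.
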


\subsection{Our settings}
We keep the assumption of Theorem \ref{main1}, that is, let $(Q,W)$ be a selfinjective QP,   $\Lambda:=\PP(Q,W)$ and $\kk$ a set of vertices of $Q_0$ satisfying the conditions {\rm (a1)} and {\rm (a2)}.
We denote by $P_i$ the indecomposable projective $\La$-module corresponding to the vertex $i \in Q_0$ and let $\TT:=\K^{\rm{b}}(\proj\La)$. 
Without loss of generality, we can assume that $(Q,W)$ is reduced since $\PP(\mu_\kk(Q,W))$
is isomorphic to $\PP(\mu_\kk(Q_{\rm red},W_{\rm red}))$, where $(Q_{\rm red},W_{\rm red})$ is a reduced part of $(Q,W)$ (\cite{DWZ}).

For a pair of arrows $a$ and $b$, define $\partial_{(a,b)}W$ by
\[\partial_{(a,b)}(a_1a_2\cdots a_m)=\sum_{a_i=a,\ a_{i+1}=b}a_{i+2}\cdots a_ma_1\cdots a_{i-1}\]
for any cycle $a_1\cdots a_m$ in $W$ and extend linearly and continuously. 
We denote by $\phi$ the natural surjective map $\widehat{KQ}\to \PP(Q,W)$. 
We simply denote $\phi p$ by $p$ for any element $p$ in $\widehat{KQ}$. 
Then, for any $i\in Q_0$, we have the following exact sequence 
in $\mod\Lambda$ \cite[Theorem 3.7]{HI}.

\begin{eqnarray}\label{resolution}
{\displaystyle
P_i \xrightarrow{f_{i2}:=(b)_b} \overbrace{\bigoplus_{\begin{smallmatrix}b\in Q_1, s(b)=i\end{smallmatrix}} P_{e(b)}}^{U_{i}:=} 
\xrightarrow{f_{i1}:={}_b(\partial_{(a,b)}W)_{a}}  \overbrace{\bigoplus_{\begin{smallmatrix}a\in Q_1, e(a)=i\end{smallmatrix}} P_{s(a)}}^{V_{i}:=} \xrightarrow{f_{i0}:={}_a(a)} 
P_i.}
\end{eqnarray}

Note that $f_{i2}$ is a minimal left $(\add(\Lambda/P_i))$-approximation and 
$f_{i0}$ is a minimal right $(\add(\Lambda/P_i))$-approximation. 
We embed the morphism $f_{i2}$  to a triangle in $\TT$ 
\begin{eqnarray}\label{h_i}
P_i \xrightarrow{f_{i2}}U_{i}\xrightarrow{h_i}P_i^*\xrightarrow{} P_i[1].
\end{eqnarray}
 
Then $P_i^*$ is the object  
$$(\cdots\to0\to \stackrel{-1}P_i\xrightarrow{f_{i2}}\stackrel{0}U_{i}\to0\to\cdots)$$ 
and we have a complex
\begin{eqnarray}\label{g_i}
P_i^* \xrightarrow{g_i}V_{i}\xrightarrow{f_{i0}}P_i
\end{eqnarray}
in $\TT$, where $g_i=({g_i}^j)_{j\in\mathbb{Z}}$ is defined by $\ {g_i}^0=f_{i1}$ and ${g_i}^j=0$ for $j\neq0$. 

Using these notations, $\mu_I(\La)$ is given as follows
$$\mu_I(\La)=\bigoplus_{\begin{smallmatrix}l\in I\end{smallmatrix}}P_l^*\oplus\bigoplus_{\begin{smallmatrix}j\notin I\end{smallmatrix}}P_j.$$ 
In the  rest of this paper, we put $T:=\mu_I(\La)$ for simplicity.

Since $\kk$ satisfies the conditions {\rm (a1)} and {\rm (a2)}, we can consider a pre-mutation $\widetilde{\mu}_{\kk}(Q,W)$. Let $(Q',W'):=\widetilde{\mu}_{\kk}(Q,W)$. Then $W'=[W]+\Delta$ is defined as follows. 

$\bullet$  $[W]$ is obtained from the potential $W$ by replacing all compositions $ab$ by the new arrows $[ab]$ for each pair of arrows $u\overset{a}{\to} l\overset{b}{\to}  v$ and $l\in\kk$. 

$\bullet$  $\Delta={\displaystyle\sum_{\begin{smallmatrix}a,b\in Q_1,l\in\kk\\
e(a)=l=s(b)\end{smallmatrix}}}[ab]b^*a^*$. 

Then, we define a $K$-algebra homomorphism  $\phi':\widehat{KQ'} \to \End_{\TT}(T)$ as follows. 

\begin{itemize}

\item[$\bullet$]For $l\in I$, define $\phi'l={\bf p}_l{\bf i}_l\in\End_\La(T)$, where ${\bf p}_l$ is the canonical projection ${\bf p}_l:T\to  P_l^*$ and ${\bf i}_l$ is the canonical injection ${\bf p}_l:P_l^*\to T$. For $j\in Q_0'$ with $j\notin I$, define $\phi'j=\phi j$.
 
\item[$\bullet$]For $a\in Q_1\cap Q_1'$, define $\phi'a=\phi a$.
\item[$\bullet$]Define $\phi'[ab]=\phi a\phi b$  for each pair of arrows $u\overset{a}{\to} l\overset{b}{\to}  v$.

Define  
\begin{eqnarray*}
\phi'((a^{\ast})_{a\in Q_1,\ e(a)=l})=g_l&\in&\TT(P_l^{\ast},\bigoplus_{a\in Q_1,\ e(a)=l}P_{s(a)}),\\
\phi'((b^{\ast})_{b\in Q_1,\ s(b)=l})=-h_l&\in&\TT(\bigoplus_{b\in Q_1,\ s(b)=l}P_{e(b)},P_l^{\ast}).
\end{eqnarray*}
where $g_l,h_l$ are given in (\ref{h_i}), (\ref{g_i}).
\end{itemize}

As before, we simply denote $\phi' p$ by $p$ for any element $p$ in $\widehat{KQ'}$. 
Then we give the following proposition.

\begin{prop}\label{ver l}
For a vertex $l$ in $Q_0$ with $l\in\kk$, 
we have the following right 2-almost split sequence in $\add T$
\[{\bigoplus_{a\in Q_1,\ e(a)=l}P_{s(a)}\xrightarrow{{}_a([ab])_{b}}
\bigoplus_{b\in Q_1,\ s(b)=l}P_{e(b)}\xrightarrow{{}_b(b^{\ast})}P_l^{\ast}.}\]
\end{prop}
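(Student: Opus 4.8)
The plan is to unwind the definition of a right $2$-almost split sequence in $\add T$ and verify the exactness of
\[
\TT(T,V_l)\xrightarrow{\;{}_a([ab])_b\;}\TT(T,U_l)\xrightarrow{\;{}_b(b^\ast)\;}J_\TT(T,P_l^\ast)\longrightarrow 0,
\]
where $U_l=\bigoplus_{s(b)=l}P_{e(b)}$ and $V_l=\bigoplus_{e(a)=l}P_{s(a)}$. I would first record the structural facts that frame the computation: by condition (a2) every arrow incident to $l$ has its other endpoint outside $\kk$, so both $U_l$ and $V_l$ lie in $\add T$; by the definition of $\phi'$ the morphism ${}_a([ab])_b$ is exactly the composite $V_l\xrightarrow{f_{l0}}P_l\xrightarrow{f_{l2}}U_l$, and ${}_b(b^\ast)=-h_l$. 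Thus everything reduces to studying the triangle $P_l\xrightarrow{f_{l2}}U_l\xrightarrow{h_l}P_l^\ast\xrightarrow{\delta}P_l[1]$ of (\ref{h_i}) under $\TT(T,-)$.

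The combinatorial heart of the argument is the remark that any path in $Q$ of positive length between two vertices of $\kk$ factors through $f_{m2}$ — its first arrow $b$ satisfies $s(b)=m\in\kk$, hence appears among the components of $f_{m2}\colon P_m\to U_m$ — whereas $\operatorname{id}_{P_m}$ does not, since $P_m$ is not a summand of $U_m$. Feeding this (together with $\Ext^1_\La(P_j,P_l)=0$ for the projective summands) into the computation of $\TT(P,P_l[1])$ over the indecomposable summands $P$ of $T$ then yields $\TT(P_m^\ast,P_l[1])=0$ for $m\neq l$ in $\kk$ and $\TT(P_l^\ast,P_l[1])\cong K$, hence $\TT(T,P_l[1])\cong K$.

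For exactness at the right end (that is, for $-h_l$ being right almost split in $\add T$): since $T$ is basic, $P_l^\ast$ is not isomorphic to any of the modules $P_{e(b)}$, which are the indecomposable summands of $U_l$, so every morphism from $\add T$ into $P_l^\ast$ that factors through $U_l$ is radical; hence the image of the induced map $\TT(T,U_l)\to\TT(T,P_l^\ast)$ lies in $J_\TT(T,P_l^\ast)$. The long exact sequence of the triangle then embeds the cokernel of this map into $\TT(T,P_l[1])\cong K$, while the preceding containment produces a surjection of that cokernel onto the one-dimensional quotient $\TT(T,P_l^\ast)/J_\TT(T,P_l^\ast)$; comparing dimensions forces the image to be exactly $J_\TT(T,P_l^\ast)$.

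Finally, for exactness in the middle, the same long exact sequence identifies the kernel of $\TT(T,U_l)\to\TT(T,P_l^\ast)$ with the image of $\TT(T,P_l)\xrightarrow{f_{l2}}\TT(T,U_l)$; since ${}_a([ab])_b$ equals $f_{l0}$ followed by $f_{l2}$, it then suffices to show that $f_{l0}\colon V_l\to P_l$ is a right $(\add T)$-approximation, so that $\TT(T,V_l)\to\TT(T,P_l)$ is surjective. For the summands $P_j$, $j\notin\kk$, this holds because $f_{l0}$ is a minimal right $(\add(\La/P_l))$-approximation and $P_j\in\add(\La/P_l)$. For a summand $P_m^\ast$, $m\in\kk$, a morphism $P_m^\ast\to P_l$ in $\TT$ is a $\La$-homomorphism $\xi\colon U_m\to P_l$ killing $\operatorname{im}f_{m2}=\ker f_{m1}$; I would use the injectivity of $P_l$ (here selfinjectivity of $\La$ enters) to extend the induced map $\operatorname{im}f_{m1}\to P_l$ along $\operatorname{im}f_{m1}\hookrightarrow V_m$, obtaining $\tilde\xi\colon V_m\to P_l$ with $\xi=f_{m1}\tilde\xi$; since $V_m\in\add(\La/P_l)$, factor $\tilde\xi=\mu f_{l0}$; then $\zeta:=f_{m1}\mu\colon U_m\to V_l$ satisfies $f_{m2}\zeta=(f_{m2}f_{m1})\mu=0$ by exactness of (\ref{resolution}) at $U_m$, and $\zeta f_{l0}=\xi$, so it defines a morphism $P_m^\ast\to V_l$ composing with $f_{l0}$ to $\xi$. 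Assembling the three points gives the asserted right $2$-almost split sequence; I expect this last step — promoting $f_{l0}$ from an approximation relative to the module category $\add(\La/P_l)$ to one relative to the larger category $\add T$ containing the complexes $P_n^\ast$ — to be the only substantial obstacle, since it is the single place where selfinjectivity of $\La$ and the shape of (\ref{resolution}) must be combined rather than invoking formal properties of the triangle.
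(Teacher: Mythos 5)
Your proposal is correct, and for half of the argument it runs parallel to the paper, while for the other half it takes a genuinely different route. The middle exactness is handled exactly as in the paper's Proposition \ref{I-III}(a)(ii): reduce via the triangle (\ref{h_i}) to the surjectivity of $\TT(T,V_l)\xrightarrow{f_{l0}}\TT(T,P_l)$, and prove that surjectivity on the summands $P_m^\ast$ by factoring a chain map $U_m\to P_l$ through $\operatorname{im}f_{m1}$ (the paper's $C_m$), extending by injectivity of $P_l$, excluding $P_l$ from $\add V_m$ via (a2), and invoking the approximation property of $f_{l0}$; your $\zeta=f_{m1}\mu$ is literally the paper's $g^0=q_mr_mu$. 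Where you diverge is the exactness at the right end: the paper proves that $h_l$ is right almost split by explicitly constructing liftings, case by case, using Lemma \ref{obs} (the socle/injective-hull analysis of radical maps out of $P_m^\ast$), whereas you replace this by a dimension count — the image of $\TT(T,U_l)\to\TT(T,P_l^\ast)$ lies in $J_{\TT}(T,P_l^\ast)$ since $P_l^\ast$ is not a summand of $U_l$, the cokernel embeds into $\TT(T,P_l[1])$ by the long exact sequence, and $\TT(T,P_l[1])\cong K$ because every radical map $P_m\to P_l$ factors through $f_{m2}$ while $\operatorname{id}_{P_l}$ does not. This computation is correct (morphisms $P_m\to P_l$ are convergent combinations of paths, and grouping by first arrow gives the factorization through $f_{m2}$), and it buys you a shorter, less constructive argument that avoids Lemma \ref{obs} for this proposition; its hidden cost is that the one-dimensionality of $\TT(T,P_l^\ast)/J_{\TT}(T,P_l^\ast)$ presupposes that $P_l^\ast$ is indecomposable with local endomorphism ring — a fact the paper also uses (in the proof of Proposition \ref{ver i}, Step 1(iii)) and which follows from \cite{AI} since silting mutation preserves basicness, so this is an acceptable presupposition rather than a gap. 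Note also that the paper proves the stronger statement that the sequence extends to a weak $2$-almost split sequence (parts (iii),(iv) of Proposition \ref{I-III}(a)), which is needed later for Proposition \ref{ver i}; your argument delivers exactly the right $2$-almost split half asserted here, which suffices for the stated proposition.
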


Next take a vertex $j\in Q_0$ with $j\notin \kk$.
Let  
$\kk_1:=\{l\in\kk\ |\ \exists u\in Q_1; s(u)=l,e(u)=j \}$ and $\kk_2:=\{l\in\kk\ |\ \exists v\in Q_1; s(v)=j,e(v)=l \}$.

We define complexes by 
\begin{eqnarray*}(P_{\kk_1}^* \xrightarrow{\aaa^*}V_{{\kk_1}}\xrightarrow{\aaa}P_{\kk_1}):=\bigoplus_{\begin{smallmatrix}k_1\in\kk_1\end{smallmatrix}}(P_{k_1}^* \xrightarrow{(a^*)_a}V_{{k_1}}\xrightarrow{{}_a(a)}P_{k_1}),\\
(P_{\kk_2} \xrightarrow{\bb}U_{{\kk_2}}\xrightarrow{\bb^*}P_{\kk_2}^*):=\bigoplus_{\begin{smallmatrix}k_2\in\kk_2\end{smallmatrix}}( P_{k_2} \xrightarrow{(b)_b}U_{{k_2}}\xrightarrow{{}_b(b^*)}P_{k_2}^*).
\end{eqnarray*}

Moreover we decompose $V_{j}=P_{\kk_1}\oplus V_{j}'$ and  $U_{j}= P_{\kk_2}\oplus U_{j}'$.
Then we can write the sequence (\ref{resolution}) by 

\begin{eqnarray*}
P_j\xrightarrow{(\dd\ \vv)}U_{j}'\oplus P_{\kk_2}\xrightarrow{{f_1 \aaa\ \ \ f_1' \choose \bb f_2 \aaa\ \ \ \bb f_2'}}P_{\kk_1}\oplus V_{j}'\xrightarrow{{\uu\choose \cc}}P_j,
\end{eqnarray*}
where 
\begin{eqnarray*}
\uu:={}_u(u)\ {\rm for}\ \{u\in Q_1\ |\ s(u)\in\kk,e(u)=j\},\\
\cc:={}_c(c)\ {\rm for}\ \{c\in Q_1\ |\ s(c)\notin\kk,e(c)=j\},\\
\vv:=(v)_v\ {\rm for}\ \{v\in Q_1\ |\ s(v)=j,e(v)\in\kk \},\\
\dd:=(d)_d\ {\rm for}\ \{d\in Q_1\ |\ s(d)=j,e(d)\notin\kk\}, \\
f_1:={}_d(\partial_{([au],d)}[W])_{(a,u)},\ \ \ 
f_1':={}_d(\partial_{(c,d)}[W])_{c},\\
f_2:={}_{(v,b)}(\partial_{([au],[vb])}[W])_{(a,u)},\ \ \ 
f_2':={}_{(v,b)}(\partial_{(c,[vb])}[W])_{c}.
\end{eqnarray*}

Then we have the following diagram

\begin{eqnarray*}\label{dia}\xymatrix@C50pt@R30pt{
&P_{\kk_1}^* \ar[r]|{\aaa^*}  &V_{\kk_1} \ar[r]|{\aaa} & P_{\kk_1}\ar[rd]|{\uu}  &   \\
P_j\ar[r]|{\dd}\ar[dr]|{\vv}&U_{j}' \ar[rr]|(0.3){f_1'}\ar[ur]^{f_1}    &   & V_{j}' \ar[r]|{\cc}&P_j  \\
&P_{\kk_2}\ar[r]|{\bb} & U_{\kk_2}\ar[r]|{\bb^*} \ar[ur]|{f_2'}\ar[uu]|(0.4){f_2}   & P_{\kk_2}^*\ar[ru]|{{\vv^*}} &  }
\end{eqnarray*}
where $\vv^*={}_v(v^*)$ for $\{v\in Q_1\ |\ s(v)=j,e(v)\in\kk\}$. 

Then we give the following proposition. 

\begin{prop}\label{ver i}
For a vertex $j$ in $Q_0$ with $j\notin \kk$, we have the following right 2-almost split sequence in $\add T$
\begin{equation*}\label{simple complex}
{\displaystyle
P_{\kk_1}^*
\oplus U_{j}'
\oplus U_{\kk_2}
\xrightarrow{\left(\begin{smallmatrix}\aaa^*&0&0\\ 
f_1&f_1'&0\\
f_2&f_2'&{\bb}^*
\end{smallmatrix}\right)}
V_{\kk_1}
\oplus V_{j}'
\oplus P_{\kk_2}^*
\xrightarrow{\left(\begin{smallmatrix}\aaa\uu\\ 
\cc\\
{\vv^*}
\end{smallmatrix}\right)}{P_j.}}\end{equation*}
\end{prop}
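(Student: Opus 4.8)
\textbf{Proof plan for Proposition \ref{ver i}.}
The plan is to verify the defining exactness condition for a right 2-almost split sequence in $\add T$, namely that applying $\TT(T,-)$ to the proposed complex yields an exact sequence ending in $J_\TT(T,P_j)\to 0$. Since $T=\bigoplus_{l\in I}P_l^*\oplus\bigoplus_{j\notin I}P_j$, it suffices to test against each indecomposable summand of $T$, i.e.\ to check exactness of $\TT(P_l^*,-)$ for $l\in I$ and $\TT(P_m,-)$ for $m\notin I$ applied to the middle complex, with the third term replaced by $J_\TT$. The crucial structural input is the four-term exact sequence (\ref{resolution}) for $P_j$ in $\mod\La$, together with the triangles (\ref{h_i}) defining $P_l^*$ and $h_l$, and the complexes (\ref{g_i}) defining $g_l$; these let us translate morphisms in $\TT$ out of $P_l^*$ and $P_{k_2}^*$ into data about (\ref{resolution}).

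First I would unwind what the entries of the two matrices mean as morphisms in $\TT$: the maps $\aaa^*$, $\bb^*$, $\vv^*$ are the components of the $g_l$'s and $h_l$'s (up to sign, as fixed in the definition of $\phi'$), while $f_1,f_1',f_2,f_2'$ come from the cyclic-derivative description of $f_{j1}$ in (\ref{resolution}) after decomposing $V_j=P_{\kk_1}\oplus V_j'$ and $U_j=P_{\kk_2}\oplus U_j'$, so that the displayed $4$-term sequence
\[
P_j\xrightarrow{(\dd\ \vv)}U_j'\oplus P_{\kk_2}\xrightarrow{\left(\begin{smallmatrix}f_1&\aaa&f_1'\\ \bb f_2&\aaa&\bb f_2'\end{smallmatrix}\right)}P_{\kk_1}\oplus V_j'\xrightarrow{\binom{\uu}{\cc}}P_j
\]
is literally (\ref{resolution}) reorganized. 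The point of the big diagram is that the proposed complex is the total complex (mapping cone pattern) obtained by splicing (\ref{resolution}) for $P_j$ with the complexes (\ref{g_i}) for $k_1\in\kk_1$ at the $P_{\kk_1}$ spot and the triangles (\ref{h_i}) for $k_2\in\kk_2$ at the $P_{\kk_2}$ spot; replacing $P_{\kk_1}$ by $P_{\kk_1}^*$ (via $g$) and $P_{\kk_2}$ by $P_{\kk_2}^*$ (via $h$) is exactly what passing from $\La$ to $T$ forces.

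Then the verification splits into three checks. (i) \emph{Right almost split:} $\left(\begin{smallmatrix}\aaa\uu\\ \cc\\ \vv^*\end{smallmatrix}\right)$ lands in $J_\TT(T,P_j)$ and every non-retraction $T'\to P_j$ factors through it; this follows because $f_{j0}={}_a(a)$ is a minimal right $(\add(\La/P_j))$-approximation by the remark after (\ref{resolution}), and the only new summands of $T$ mapping into $P_j$ that are not already covered by $f_{j0}$ are the $P_{k_1}^*$ and $P_{k_2}^*$ with $k_1\in\kk_1$, $k_2\in\kk_2$, whose maps into $P_j$ are precisely $\aaa^*\cdot(\text{into }P_{\kk_1})\cdot\uu$ resp.\ $\vv^*$ — these are accounted for in the first matrix via $g_{k_1}$ and in the cone structure via $h_{k_2}$. (ii) \emph{Pseudo-kernel:} any map from a summand of $T$ into the middle term composing to zero into $P_j$ factors through the first matrix; here one uses exactness of (\ref{resolution}) at $V_j$ (for the $P_m$, $m\notin I$, components, after applying $\Hom_\La(P_m,-)$) and, for the $P_l^*$ components with $l\in I$, one applies $\TT(P_l^*,-)$, uses the triangle (\ref{h_i}) to replace $\TT(P_l^*,-)$ by the relevant piece of $\Hom_\La(U_l,-)\to\Hom_\La(P_l,-)$, and again invokes exactness of (\ref{resolution}). (iii) One checks the composite of the two matrices is zero in $\TT$: the $\La$-module part is the composite $f_{j1}f_{j0}=0$ from (\ref{resolution}), while the genuinely homotopy-theoretic cancellations (the terms landing in degree $-1$) are forced by $f_{l2}h_l$ fitting into the triangle (\ref{h_i}) and by the compatibility $g_l$ against $f_{l1}$.

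\textbf{Main obstacle.} The bookkeeping in step (ii) is where the real work lies: one must show that a cocycle condition in $\TT$ — which involves not just a kernel computation in $\mod\La$ but also a chain-homotopy term coming from the degree-$(-1)$ parts of $P_l^*$ and $P_{k_2}^*$ — is exactly captured by the image of the first matrix, including the off-diagonal blocks $f_1,f_1',f_2,f_2'$. Concretely, the hard part is to check that the relations among the $\partial_{(a,b)}W$ encoded in Proposition \ref{birs2} applied to $(Q',W')$ match the kernel of $\TT(T,-)$ applied to $\binom{\aaa\uu}{\cc}$, $\vv^*$; this is a direct but delicate comparison of the cyclic derivatives of $[W]+\Delta$ with the matrix entries above, and it is the same mechanism by which Proposition \ref{ver l} handles the vertices $l\in\kk$.
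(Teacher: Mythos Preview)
Your overall framework---test against each indecomposable summand of $T$, handle the projectives $P_m$ with $m\notin I$ via exactness of (\ref{resolution}), and handle the $P_l^*$ separately---is the right starting point, and it is the same scaffold the paper uses. But the plan as written has a genuine gap in the pseudo-kernel step, and it misidentifies the main obstacle.

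The pseudo-kernel verification is not a single appeal to exactness of (\ref{resolution}). In the paper the argument proceeds by a three-stage factorization, each stage relying on a separately established exactness lemma (Proposition~\ref{I-III}): given $(p_1\ p_2\ p_3)$ in the kernel, one first shows $p_3\in J_\TT$ (the subtle case being $T=P_l^*$ with $l\in\kk_2$, where one needs that $p_3\vv^*\in J_{\add T}^2$), so that $p_3$ factors through $\bb^*$ by Proposition~\ref{I-III}(a); then the residual lands in the kernel of $\TT(T,V_j)\xrightarrow{f_{j0}}\TT(T,P_j)$, which is covered by Proposition~\ref{I-III}(c); finally the remaining discrepancy in the $V_{\kk_1}$-component lies in the kernel of $\TT(T,V_{\kk_1})\xrightarrow{\aaa}\TT(T,P_{\kk_1})$, handled by Proposition~\ref{I-III}(b). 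Your sketch ``use the triangle (\ref{h_i}) to replace $\TT(P_l^*,-)$ by $\Hom_\La(U_l,-)\to\Hom_\La(P_l,-)$ and invoke exactness of (\ref{resolution})'' is essentially how one \emph{proves} those intermediate lemmas, but you have not isolated them, and without doing so it is not clear your argument would close; in particular, you never address why $p_3$ is forced into the radical, which is exactly where the off-diagonal blocks $f_2,f_2'$ enter.

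Your ``Main obstacle'' paragraph is also misplaced: comparing the cyclic derivatives of $[W]+\Delta$ with the matrix entries is not part of proving Proposition~\ref{ver i} at all. That comparison is how Theorem~\ref{main1} is deduced \emph{from} Propositions~\ref{ver l} and~\ref{ver i} via Proposition~\ref{birs2}. Proposition~\ref{ver i} itself is a purely homotopy-categorical exactness statement in $\add T$; the potential $W'$ plays no role in its proof. Similarly, in your right-almost-split step, you restrict attention to $P_l^*$ with $l\in\kk_1\cup\kk_2$, but every $P_l^*$ with $l\in\kk$ must be handled, and the cases $l\notin\kk_2$ and $l\in\kk_2$ require genuinely different arguments (the latter needs the bijectivity of $(v^*)_v$ on $(J_\TT/J_{\add T}^2)(P_l^*,P_j)$, coming from the minimal-left-almost-split property of $g_l$).
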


Before starting to prove Propositions \ref{ver l}, \ref{ver i}, we prove Theorem \ref{main1} by using them.

\begin{proof}[Proof of Theorem \ref{main1}]
It is enough to show  
\begin{eqnarray}\label{isomorphic}
\PP(Q',W')\cong\End_{\TT}(T).
\end{eqnarray}
As above, we have a $K$-algebra homomorphism $\phi':\widehat{KQ'} \to \End_{\TT}(T)$. 
To show (\ref{isomorphic}), it is enough to show that $\phi'$ is surjective and $\Ker\phi'=\overline{\langle \partial_a W' \mid a \in Q'_1 \rangle}$ by definition of  the Jacobian algebra. 

Put $\Gamma:=\End_{\TT}(T)$. 
Then, by Proposition \ref{birs2}, it is enough to show that 
the following sequence is exact for any $i\in Q_0'$
\[\xymatrix@C50pt{  {\displaystyle\bigoplus_{\begin{smallmatrix}r\in \{\partial_a W'\mid a \in Q'_1\}, e(r)=i\end{smallmatrix}}}\Gamma(\phi s(r)) \ar[r]^{\ \ \ \ \ {}_r(\phi\partial_a^\mathbb{R}r)_{a}}  & 
{\displaystyle\bigoplus_{\begin{smallmatrix}a\in Q_1', e(a)=i\end{smallmatrix}}} \Gamma(\phi s(a)) \ar[r]^{\ \ \ \ \  \ {}_a(\phi a)} &  J_\Gamma(\displaystyle{\phi i}) \ar[r]& 0.  }\]
Then, by applying $\Hom_{\TT}(T,-)$ to the complexes of Propositions \ref{ver l} and \ref{ver i}, 
we have $\Gamma$-module exact sequences from definition right 2-almost split sequences. 
By expressing them in terms of $Q'$ and $W'$, one can check that they are the desired sequences. 
\end{proof}

The rest of this paper is devoted to showing Propositions \ref{ver l} and \ref{ver i}.


\subsection{Exactness of some sequences}
We keep the notation of previous subsections. 
We will show the following proposition.

\begin{prop}\label{I-III}

\begin{itemize}
\item[(a)] For any $l\in\kk$, we have the following weak 2-almost split sequence in $\add T$:
$$P_l^{\ast}\xrightarrow{g_l}V_{l}\xrightarrow{f_{l0}f_{l2}}U_{l}\xrightarrow{h_l}P_l^{\ast},$$
where $g_l,f_{l0},f_{l2}$ and $h_l$ are the morphism given in $(\ref{resolution}), (\ref{h_i}), (\ref{g_i})$.
\item[(b)] For any $l,m\in\kk$, the following sequence is exact:
$$
\TT(P_m^{\ast},P_l^{\ast})\xrightarrow{g_l}\TT(P_m^{\ast},V_{l})\xrightarrow{f_{l0}}\TT(P_m^{\ast},P_l).
$$
Therefore the following sequence is exact:
$$
\TT(T,P_{\kk_1}^{\ast})\xrightarrow{\aaa^*}\TT(T,V_{\kk_1})\xrightarrow{\aaa}\TT(T,P_{\kk_1}).
$$
\item[(c)] For any $j\in Q_0$ with $j\notin\kk$ and any $l\in\kk$, the following sequence is exact:
$$
\TT(P_l^{\ast},U_{j})\xrightarrow{f_{j1}}\TT(P_l^{\ast},V_{j})\xrightarrow{f_{j0}}\TT(P_l^{\ast},P_j).
$$
Therefore the following sequence is exact:
$$
\TT(T,U_{j})\xrightarrow{f_{j1}}\TT(T,V_{j})\xrightarrow{f_{j0}}\TT(T,P_j).
$$
\end{itemize}
\end{prop}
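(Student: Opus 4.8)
The plan is to derive all three parts from two ingredients attached to each vertex $i$: the four-term exact sequence (\ref{resolution}) in $\mod\La$ and the triangle (\ref{h_i}) in $\TT$ defining $P_i^{\ast}$. The structural facts used throughout are: (1) $T=\mu_{\kk}(\La)$ is silting (Theorem \ref{mutation is silting}), so $\TT(T,T[n])=0$ for $n>0$; (2) $\La$ is selfinjective, so each $P_j$ is injective as well as projective; (3) by (a1) and (a2), for $l\in\kk$ every summand of $U_l=\bigoplus_{s(b)=l}P_{e(b)}$ and of $V_l=\bigoplus_{e(a)=l}P_{s(a)}$ is some $P_j$ with $j\notin\kk$, so $U_l,V_l\in\add T$. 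I would first record the computation that, applying $\TT(-,N)$ to (\ref{h_i}) and using $\TT(P_i[1],N)=0$ (as $P_i$ is projective and $N$ a module), one obtains a natural isomorphism $\TT(P_i^{\ast},N)\cong\{\beta\in\Hom_\La(U_i,N)\mid f_{i2}\beta=0\}\cong\Hom_\La(\Coker f_{i2},N)$ for every module $N$; since there are then no homotopies to quotient by, all the subsequent bookkeeping takes place inside ordinary $\Hom$-groups in $\mod\La$.

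Parts (b) and (c) then become diagram chases. For (b): an element of $\Ker\bigl(\TT(P_m^{\ast},V_l)\to\TT(P_m^{\ast},P_l)\bigr)$ is represented by $\beta\colon U_m\to V_l$ with $f_{m2}\beta=0$ and $\beta f_{l0}=0$; exactness of (\ref{resolution}) at $V_l$ gives $\Im\beta\subseteq\Ker f_{l0}=\Im f_{l1}$, so by projectivity of $U_m$ we may lift $\beta$ through $f_{l1}$ to some $\gamma^0\colon U_m\to U_l$, and a second lift of $f_{m2}\gamma^0$ through $f_{l2}$ (possible since $f_{m2}\gamma^0 f_{l1}=f_{m2}\beta=0$ puts $\Im(f_{m2}\gamma^0)$ inside $\Ker f_{l1}=\Im f_{l2}$, and $P_m$ is projective) yields a chain map $\gamma=(\gamma^{-1},\gamma^0)\colon P_m^{\ast}\to P_l^{\ast}$ with $\gamma g_l=\beta$. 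For (c): rewriting both ends via the isomorphism above, I would chase the diagram obtained by applying the exact functors $\Hom_\La(U_l,-)$ and $\Hom_\La(P_l,-)$ to (\ref{resolution}); the only non-formal point is absorbing a correction term in $\Hom_\La(P_l,P_j)$, which works because $f_{l2}$ is a \emph{left} $\add(\La/P_l)$-approximation and $P_j\in\add(\La/P_l)$ (since $j\neq l$), so $\Hom_\La(U_l,P_j)\twoheadrightarrow\Hom_\La(P_l,P_j)$. In both cases, passing from the $P_m^{\ast}$- (resp.\ $P_l^{\ast}$-) summands of $T$ to all of $T$ is automatic: the remaining summands of $T$ are the $P_j$ with $j\notin\kk$, and $\Hom_\La(P_j,-)$ is exact.

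Part (a) I would obtain by splicing (\ref{resolution}) for $l$ with (\ref{h_i}): the ``right'' half is $V_l\xrightarrow{f_{l0}f_{l2}}U_l\xrightarrow{h_l}P_l^{\ast}$ and the ``left'' half is $P_l^{\ast}\xrightarrow{g_l}V_l\xrightarrow{f_{l0}f_{l2}}U_l$; I would establish the left half by the dual argument (here selfinjectivity of $\La$ ensures the data dualizes correctly) and concentrate on the right half. Applying $\TT(T,-)$ to (\ref{h_i}): $\TT(T,U_l[1])=0$ by (1) and (3), so $(h_l)_{\ast}$ maps onto $\Ker\bigl(\TT(T,P_l^{\ast})\to\TT(T,P_l[1])\bigr)$ with kernel $\Im\bigl((f_{l2})_{\ast}\colon\TT(T,P_l)\to\TT(T,U_l)\bigr)$. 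Then I would check: (i) $\TT(T,P_l[1])$ has vanishing $P_j$-components ($\Ext^1$ between projectives) and vanishing $P_m^{\ast}$-components for $m\neq l$ (by the first-paragraph computation together with $f_{m2}$ being a left $\add(\La/P_m)$-approximation and $P_l\in\add(\La/P_m)$), so the obstruction is concentrated in $\TT(P_l^{\ast},P_l[1])$; (ii) on that diagonal term the image of $(h_l)_{\ast}$ equals $J_\TT(P_l^{\ast},P_l^{\ast})=\operatorname{rad}\End_\TT(P_l^{\ast})$, which with (i) gives $\Im((h_l)_{\ast})=J_\TT(T,P_l^{\ast})$; (iii) $\Ker((h_l)_{\ast})=\Im((f_{l2})_{\ast})$ coincides with $\Im((f_{l0}f_{l2})_{\ast})$, which follows from $f_{l0}$ being a right $\add(\La/P_l)$-approximation (so that every map $U_m\to P_l$, necessarily radical as $Q$ has no loop at $l$, lifts through $f_{l0}$) together with a short chase using that $V_l$ is injective.

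The main obstacle is step (ii): showing that on the diagonal the image of $(h_l)_{\ast}$ is \emph{exactly} the radical, equivalently that the connecting morphism $c\colon P_l^{\ast}\to P_l[1]$ generates a one-dimensional submodule of $\TT(P_l^{\ast},P_l[1])$ over the local ring $\End_\TT(P_l^{\ast})$. That $c\neq0$ (hence $\Im((h_l)_{\ast})\subseteq J_\TT(P_l^{\ast},P_l^{\ast})$) is immediate from the indecomposability of $P_l^{\ast}$ (a summand of the basic silting object $T$); the reverse inclusion — that $\operatorname{rad}\End_\TT(P_l^{\ast})$ annihilates $c$ — is where the \emph{minimality} of $f_{l2}$ and the selfinjective structure of $(Q,W)$ (which controls $\Ker f_{l2}$ and $\Coker f_{l0}$, cf.\ \cite[Theorem 3.7]{HI}) are genuinely used, and is the one point of the argument that is not a formal manipulation.
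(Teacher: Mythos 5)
Your reductions for (b) and (c) are sound and essentially the paper's own chases: (b) is verbatim the same lifting argument through $f_{l1}$ and $f_{l2}$, and in (c) your trick of absorbing the correction term using that $f_{l2}$ is a left $\add(\La/P_l)$-approximation (with $P_j\in\add(\La/P_l)$) is a legitimate, slightly cleaner variant of the paper's route, which instead builds a radical chain map $P_l^{\ast}\to P_j^{\ast}$ and invokes its Lemma \ref{obs}. Your long-exact-sequence framework for the right half of (a) is also correctly set up, and your step (i) (vanishing of the off-diagonal components of $\TT(T,P_l[1])$) is right.

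The genuine gap is exactly the point you flag and then do not prove: that on the diagonal the image of $(h_l)_{\ast}$ contains the radical, i.e.\ that every radical map $P_m^{\ast}\to P_l^{\ast}$ (in particular every radical endomorphism of $P_l^{\ast}$) annihilates the connecting morphism $c$, equivalently factors through $h_l$. This \emph{is} the right-almost-split property, the heart of part (a); saying it is ``where minimality and selfinjectivity are genuinely used'' names the ingredients but supplies no argument, so as it stands (a) is not proved. The paper closes this with Lemma \ref{obs}: extend $P_l\xrightarrow{f_{l2}}U_l$ on the left by the socle inclusion $\Soc P_l\to P_l$ (exactness here comes from \cite[Theorem 3.7]{HI}); for a radical chain map the induced map on socles must vanish (minimality of $f_{l2},f_{m2}$ forces an isomorphism otherwise, and the socles are simple), hence the degree $-1$ component kills $\Soc P_l=\Ker f_{m2}$ and, by injectivity of $P_l$ (selfinjectivity), factors as $f_{m2}j^0$; this $j^0$ is precisely the homotopy exhibiting the factorization through $h_l$. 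Secondarily, dismissing the left half of (a) as ``the dual argument'' hides the same crux for $g_l$ being left almost split, and is not literally a dualization: $\Hom_\La(-,\La)$ turns the left-mutation complex into a right-mutation complex over $\La^{\op}$, so the dual statement is not an instance of what you have proved; the paper writes out (iii) and (iv) separately, and both there and in the pseudo-kernel step it uses assumption (a2) (to get $P_l\notin\add V_m$ and $P_l\notin\add U_m$) in ways your sketch does not account for.
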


We prepare the following notations. 
For any $l\in\kk$, we denote by $C_l:=\Coker f_{l2}$. Then we have the following exact sequence 
\begin{eqnarray*}
 P_l\xrightarrow{f_{l2}}U_{l}\xrightarrow{q_l}C_l\to0\ \ \mbox{ and }\ \ 0\xrightarrow{}C_{l}\xrightarrow{r_l}V_{l}\xrightarrow{f_{l0}}P_{l}.
\end{eqnarray*}

First we give the following easy observation.

\begin{lemm}\label{obs}
Let $p=(p^i)_{i\in\mathbb{Z}}\in {\TT}(P_l^*, P_m^*)$ be a morphism for $l,m\in \kk$. Then we have the following commutative diagram.
\[
\xymatrix@C30pt@R30pt{0\ar[r]& \Soc P_l\ar[r]^{d_{l}} \ar[d]^{p^{-2}} & P_l \ar[r]^{f_{l2}}\ar[d]^{p^{-1}} & U_{l}\ar[r]^{q_{l}}\ar[d]^{p^0} & C_l\ar[r]\ar[d]^{p^{1}}& 0\\
0\ar[r]& \Soc P_m\ar[r]_{d_{m}}& P_m \ar[r]_{f_{m2}}& U_{m}\ar[r]_{q_{m}} & C_m\ar[r]& 0.   }
\] 
If $p\in J_{\TT}(P_l^*, P_m^*)$, then $p^{-2}=0$, and 
there exist $j_{lm}^0\in\Hom_\Lambda(U_{l},P_m)$ and $j_{lm}^1\in\Hom_\Lambda(C_l,U_{m})$ such that $p^{-1}=f_{l2}j_{lm}^0$, $p^0=j_{lm}^0f_{m2}+q_lj_{lm}^1$ and $p^{1}=j_{lm}^1q_m$.
\end{lemm}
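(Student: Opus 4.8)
The plan is to build the commutative diagram essentially for free from the functoriality of truncation/cokernel, and then extract the factorization statement from a diagram chase using the special shape of the complexes $P_l^*$. First I would record the two short-and-long exact sequences attached to $f_{l2}$: since $f_{l2}\colon P_l\to U_l$ is a minimal left $(\add(\Lambda/P_l))$-approximation, it is injective (its kernel would be a summand of $P_l$ split off by minimality, impossible as $P_l$ is indecomposable and $f_{l2}\neq 0$ because $l$ lies on no $2$-cycle under (a1)), and moreover $\Ker f_{l2}$... wait — $f_{l2}$ injective means there is no kernel term; the left column $\Soc P_l$ arises instead because $P_l^*$ is the two-term complex $(P_l\xrightarrow{f_{l2}}U_l)$ in degrees $-1,0$, so $H^{-1}(P_l^*)=\Ker f_{l2}=0$ only if $f_{l2}$ is mono. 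Let me restate: the relevant bottom row of the diagram is $0\to\Soc P_l\to P_l\xrightarrow{f_{l2}}U_l\xrightarrow{q_l}C_l\to0$, and I claim $\Soc P_l$ is precisely what a degree-$(-1)$ component of a radical endomorphism must land in. So first I would observe that $f_{l2}$ is injective, hence $\Ker f_{l2}=0$; the term $\Soc P_l$ enters not as $\Ker f_{l2}$ but because $P_l$ is the injective envelope structure — concretely, any $\Lambda$-map $P_l\to P_l$ that becomes zero after composing with $f_{l2}$ is zero, so in fact the left square is about maps $P_l\to P_l$ lying in the radical, which factor through $\operatorname{rad}P_l$ and whose restriction behaviour on the socle is the content. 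The cleanest route: since $P_l^*$ is a genuine two-term complex, a chain map $p=(p^{-1},p^0)\colon P_l^*\to P_m^*$ is exactly a commuting square $f_{l2}p^0=p^{-1}f_{m2}$; this immediately gives the middle two squares of the diagram, and then $p^0$ induces $p^1\colon C_l\to C_m$ on cokernels (right square) and $p^{-1}$ restricts to $p^{-2}\colon\Soc P_l\to\Soc P_m$ (left square — here I use that $p^{-1}$ must send the socle into the socle, which holds for any module homomorphism).

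Next I would handle the radical case. If $p\in J_\TT(P_l^*,P_m^*)$, then $p$ is not a homotopy equivalence; combined with the fact that $P_l^*$ and $P_m^*$ are indecomposable two-term complexes with no self-homotopies in the relevant range, radicality forces $p$ to be, up to homotopy, a map whose components are "small". Concretely I would argue: a chain map $(p^{-1},p^0)$ is null-homotopic iff there is $s\colon U_l\to P_m$ with $p^{-1}=f_{l2}s$ and $p^0=sf_{m2}$; and $p$ lies in the radical of $\End_\TT$ iff, modulo such homotopies, $p^0$ (hence $p^{-1}$) is a radical map. From $p^{-1}\in\operatorname{rad}\Hom_\Lambda(P_l,P_m)$ one gets $p^{-1}(P_l)\subseteq\operatorname{rad}P_m$, and since any homomorphism already sends $\Soc P_l$ into $\Soc P_m$ while a radical endomorphism of $P_l$ kills the socle when $l\in\kk$ (using (a2): the socle of $P_l$ is the simple at $\sigma(l)$, and there is no arrow inside $\kk$, so a radical map out of $P_l$ restricted to the socle is forced to vanish — this is where (a2) is used), we obtain $p^{-2}=0$. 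The existence of $j_{lm}^0\in\Hom_\Lambda(U_l,P_m)$ with $p^{-1}=f_{l2}j_{lm}^0$ then follows because $p^{-1}$ kills $\Soc P_l$, hence factors through $P_l/\Soc P_l$, and $f_{l2}$ is (after this quotient) the inclusion of that quotient... more precisely, $f_{l2}$ induces an isomorphism $P_l/\Soc P_l\xrightarrow{\sim}\operatorname{rad}U_l$-part, and injectivity of $U_l$'s summands (each $P_{e(b)}$ is injective, $\Lambda$ being selfinjective) lets us extend the resulting map $\Image f_{l2}\to P_m$ along $f_{l2}$ to all of $U_l$; this is the key use of selfinjectivity.

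Finally, with $j_{lm}^0$ in hand I would chase the rest: set $p'^0:=p^0-j_{lm}^0 f_{m2}$; then $f_{l2}p'^0 = f_{l2}p^0 - f_{l2}j_{lm}^0 f_{m2} = p^{-1}f_{m2} - p^{-1}f_{m2}=0$, so $p'^0$ kills $\Image f_{l2}$, hence factors as $p'^0 = q_l j_{lm}^1$ for a unique $j_{lm}^1\colon C_l\to U_m$. This gives $p^0 = j_{lm}^0 f_{m2} + q_l j_{lm}^1$ at once. Composing with $q_m$ and using $q_l$ epi together with $f_{m2}q_m=0$ yields $p^1 q_l = q_l j_{lm}^1 q_m$, and since $q_l$ is epi, $p^1 = j_{lm}^1 q_m$ as required. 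I expect the main obstacle to be the bookkeeping around which maps are injective/surjective — specifically, pinning down precisely why $f_{l2}$ is a monomorphism and why a radical degree-$(-1)$ component annihilates $\Soc P_l$, both of which rest on combining the hypotheses (a1), (a2) with selfinjectivity of $\Lambda$ (so that the $P_{e(b)}$ are injective and the approximation $f_{l2}$ is honestly a "cosyzygy-type" inclusion). Once those structural facts are isolated, the diagram chase itself is routine.
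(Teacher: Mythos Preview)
Your final diagram chase (constructing $j_{lm}^0$, then $j_{lm}^1$, then deducing $p^1=j_{lm}^1q_m$) is correct and is exactly what the paper does. But there are two genuine problems earlier.

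First, $f_{l2}$ is \emph{not} injective: the exact row in the lemma's own diagram reads $0\to\Soc P_l\xrightarrow{d_l}P_l\xrightarrow{f_{l2}}U_l\xrightarrow{q_l}C_l\to0$, so $\Ker f_{l2}=\Soc P_l$. Your claim that minimality of a left approximation forces injectivity is false (minimality controls redundant summands of the \emph{target}, not the kernel). This misreading does not wreck your construction of the commutative diagram, since any homomorphism sends socle into socle anyway, but it matters for the factorisation step: $p^{-1}=f_{l2}j_{lm}^0$ is obtained precisely because $p^{-1}$ vanishes on $\Ker f_{l2}=\Soc P_l$, hence factors through $P_l/\Soc P_l\cong\Image f_{l2}\subseteq U_l$, and then one extends to all of $U_l$ using that $P_m$ is injective (not that the summands of $U_l$ are injective, as you wrote).

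Second, and more seriously, your argument that $p\in J_{\TT}$ forces $p^{-2}=0$ has a gap. You assert that ``$p$ lies in the radical iff, modulo homotopies, $p^0$ (hence $p^{-1}$) is a radical map'' and then invoke (a2) to say a radical map kills the socle; but you give no proof of the first claim, and hypothesis (a2) is irrelevant here --- the lemma does not use it. The paper argues the contrapositive directly: if $p^{-2}\neq0$ then, both socles being simple, $p^{-2}$ is an isomorphism; since $d_l$ is an injective hull this forces $p^{-1}$ to be an isomorphism; and since $f_{l2},f_{m2}$ are \emph{minimal} left approximations, the commuting square $f_{l2}p^0=p^{-1}f_{m2}$ then forces $p^0$ to be an isomorphism as well, so $p$ is invertible, contradicting $p\in J_{\TT}$. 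Note that the passage from $p^{-1}$ iso to $p^0$ iso is exactly where minimality of the approximation earns its keep --- the very property you dismissed as only concerning the target.
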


\begin{proof}
The first assertion is clear. Assume that $p\in J_{\TT}(P_l^*, P_m^*)$. 
If $p^{-2}$ is isomorphic, then it implies that $p^{-1}$ is isomorphic since $d_l$ is  
an injective hull. 
Since $f_{l2}$ and $f_{m2}$ are minimal left $(\add(\La/P_l))$-approximation, 
$p^0$ is also isomorphic, a contradiction to $p\in J_{\TT}(P_l^*, P_m^*)$. 
Thus $p^{-2}$ is not an isomorphism.
Because $\Soc P_l,\ \Soc P_m$ are simple modules, we have $p^{-2}=0$ .
Therefore we obtain $d_lp^{-1}=p^{-2}d_m=0.$ 
Since $P_m$ is an injective module, there exists $j_{lm}^0\in\Hom_\Lambda(U_{l},P_m)$ such that $p^{-1}=f_{l2}j_{lm}^0$. 
Then since $U_{m}$ is an injective module and $f_{l2}(p^0-j_{lm}^0f_{m2})=0$, there exists $j_{lm}^1\in\Hom_\Lambda(C_l,U_{m})$ such that 
$q_lj_{lm}^1=p^0-j_{lm}^0f_{m2}$. 
Since $q_l$ is surjective, we have $p^{1}=j_{lm}^1q_m$.
\end{proof}

Now we are ready to prove Proposition \ref{I-III}.

\begin{proof}
(a) (i) We will show that $h_l$ is right almost split in $\add T$. 

Take any morphism $p=(p^i)_{i\in\mathbb{Z}}\in J_{\TT}(\Lambda/P_{\kk},P_l^*)$. 
Then clearly $p^0\in\Hom_\Lambda(\Lambda/P_{\kk},V_{l})$ gives a morphism 
$g=(g^i)_{i\in\mathbb{Z}}\in\TT(\Lambda/P_{\kk},V_{l})$ by $g^0=p^0$ and $g^i=0$ for $i\neq0$. Thus we have $gh_l=p$.

Next for any $m\in\kk$, take any morphism $p=(p^i)_{i\in\mathbb{Z}}\in J_{\TT}(P_{m}^*,P_l^*)$. 
By Lemma \ref{obs}, there exists $j^0_{ml}\in\Hom_\Lambda(U_{m},P_l)$ such that 
$p^{-1}=f_{m2}j_{ml}^0$. 
Then the morphism $p^0-j_{ml}^0f_{l2}\in\Hom_\Lambda(U_{m},U_{l})$ gives a morphism 
$g\in \TT(P_m^*,U_{l})$ satisfying $p=gh_l$.

\[
\xymatrix@C30pt@R30pt{
&&&U_{m}\ar@{.>}[llld]_{p^0-j_{ml}^0f_{l2}}\ar[ld]|{p^0}\ar[ldd]|{j_{ml}^0}\\
 U_{l}\ar[rr]^{{\rm id}}  && U_{l} &P_m\ar[u]_{f_{m2}}\ar[ld]^{p^{-1}}\ar[llld]_(0.6){} \\
0\ar[rr]\ar[u]& &P_l\ar[u]|{f_{l2}}&   }
\]

(ii) We will show that $f_{l0}f_{l2}$ is a pseudo-kernel of $h_l$ in $\add T$.

Since $(\ref{h_i})$ is a triangle, we have an exact sequence
$\TT(T,P_l)\xrightarrow{f_{l2}}\TT(T, U_{l})\xrightarrow{h_{l}}\TT(T, P_l^*)$. 
Thus we only have to show that 
$\TT(T,V_{l})\xrightarrow{f_{l0}}\TT(T,P_l)$ is surjective. 

Take any morphism $p=(p^i)_{i\in\mathbb{Z}}\in \TT(\Lambda/P_{\kk},P_l)$. 
Then since $f_{l0}$ is a right $(\add(\Lambda/P_l))$-approximation, there exists 
$g^0\in \Hom_\Lambda(\Lambda/P_{\kk},V_{l})$ such that $p^0=g^0f_{l0}$. 
Thus $g^0$ gives a morphism $g\in\TT(\Lambda/P_{\kk},V_{l})$ satisfying   
$p=gf_{l0}$.

Next for any $m\in\kk$, take any morphism $p=(p^i)_{i\in\mathbb{Z}}\in \TT(P_{m}^*,P_l)$. 
Then, by $f_{m2}p^0=0$,
there exists $s\in\Hom_\Lambda(C_{m},P_l)$ such that $p^0=q_ms$.

Then since $P_l$ is an injective module, there exists $t\in\Hom_\Lambda(V_{m},P_l)$ such that 
$s=r_mt$. Moreover, by the assumption (a2), we have $P_l\notin\add V_{m}$.
Then since $f_{l0}$ is a right $(\add(\Lambda/P_l))$-approximation, there exists 
$u\in\Hom_\Lambda(V_{m},V_{l})$ such that $t=uf_{l0}$. 
Thus we have $p^0=(q_mr_mu)f_{l0}$, and $g^0:=q_mr_mu$ gives a morphism $g\in\TT(P_{m}^*,V_{l})$ satisfying $p=gf_{l0}$.

\[
\xymatrix@C30pt@R30pt{&&V_{m}\ar@{.>}[dd]|t\ar@{.>}[lldd]|u&C_m\ar[l]^{r_m}\ar@{.>}[ldd]|s\\
&&&U_{m}\ar[u]_{q_m}\ar[ld]|{p^0}\ar@{-->}[llld]|{q_mr_mu}\\
 V_{l}\ar[rr]^{f_{l0}}  && P_{l} &P_m\ar[u]_{f_{m2}}\ar[ld]^{}\ar[llld] \\
0\ar[rr]\ar[u]& &0\ar[u]|{}&   }
\]

(iii) We will show that $g_l$ is left almost split in $\add T$. 

Take any morphism $p=(p^i)_{i\in\mathbb{Z}}\in J_{\TT}(P_l^*,\Lambda/P_{\kk})$.
Then by $f_{l2}p^0=0$,
there exists $s\in\Hom_\Lambda(C_{l},\Lambda/P_{\kk})$ such that $p^0=q_ls$.
Then since $\Lambda/P_{\kk}$ is an injective module and $r_l$ is injective, there exists $t\in\Hom_\Lambda(V_{l},\Lambda/P_{\kk})$ such that $s=r_lt$.
Then we have $p^0=q_lr_lt=f_{l1}t$ and $t$ gives a morphism $t\in\TT(V_{l},\Lambda/P_{\kk})$ satisfying 
$p=g_{l}t$.

\[
\xymatrix@C30pt@R30pt{
&&C_l\ar[rd]^{r_l}\ar@{.>}[ldd]_s&&\\
&&U_{l}\ar[u]|{q_l}\ar[r]^{f_{l1}}\ar[ld]|(0.4){p^0}&V_{l}\ar@{.>}[lld]|(0.3)t&\\
&\La/P_{\kk}&P_l\ar[ld]\ar[u]|(0.4){f_{l2}}\ar[r]&0\ar[u]\ar[lld]&\\
&0\ar[u]&&&  \\
&&&&}
\]

Next for any $m\in\kk$, take any morphism $p=(p^i)_{i\in\mathbb{Z}}\in J_{\TT}(P_l^*,P_m^*)$. 
Then, by Lemma \ref{obs}, we have ${j}_{lm}^0\in \Hom_\Lambda(U_{l}, P_m)$ and ${j}_{lm}^1\in \Hom_\Lambda(C_{l}, U_{m})$ such that 
$p^0={j}_{lm}^0f_{m2}+q_l{j}_{lm}^1$ and $p^{-1}=f_{l2}{j}_{lm}^0$. 
By the same argument of the first case, there exists $t\in\Hom_\Lambda(V_{l},U_{m})$ such that $p^0={j}_{lm}^0f_{m2}+f_{l1}t$ and $t$ gives a morphism $t\in\TT(V_{l},P_m^*)$ satisfying $p=g_{l}t$.

\[
\xymatrix@C30pt@R30pt{
&&C_l\ar[rd]^{r_l}\ar@{.>}[ldd]_{j_{lm}^1}&&\\
&&U_{l}\ar[u]|{q_l}\ar[r]^{f_{l1}}\ar@{.>}[ldd]|(0.6){j_{lm}^0}\ar[ld]|(0.4){p^0}&V_{l}\ar@{.>}[lld]|(0.3)t&\\
&U_{m}&P_l\ar[ld]|(0.4){p^{-1}}\ar[u]|(0.4){f_{l2}}\ar[r]&0\ar[lld]\ar[u]&\\
&P_m\ar[u]^{f_{m2}}&&&  \\
&&&&}
\]

(iv) We will show that $f_{l0}f_{l2}$ is a pseudo-cokernel of $g_l$ in $\add T$.

Assume $p=(p^i)_{i\in\mathbb{Z}}\in \TT(V_{l},\Lambda/P_{\kk})$ satisfies 
$g_lp=0$.
Then since $f_{l1}p^0=0$, there exists $s\in\Hom_\Lambda(P_{l},\Lambda/P_{\kk})$ such that $p^0=f_{l0}s$.

\[
\xymatrix@C30pt@R30pt{
&U_{l}\ar[r]^{f_{l1}}& V_{l}\ar[lld]|(0.3){p^0}\ar[r]^{f_{l0}}  & P_l\ar@{.>}[llld]^s \\
\Lambda/P_{\kk}&P_l\ar[r]\ar[u]^{}&0\ar[u]\ar[lld]\ar[r]\ar[u]& 0\ar[u]^{} \\  
0\ar[u]&&&}\]

Then since $f_{l2}:P_l\to V_{l}$ is a left $(\add(\Lambda/P_l))$-approximation, 
there exists $t\in\Hom_\Lambda(U_{l},\Lambda/P_{\kk})$ such that $s=f_{l2}t$. 
Thus we have $p^0=(f_{l0}f_{l2})t$ and 
$t$ gives a morphism $t\in\TT(U_{l},\Lambda/P_{\kk})$ satisfying $p=(f_{l0}f_{l2})t$.

Next for any $m\in\kk$, assume that $p=(p^i)_{i\in\mathbb{Z}}\in \TT(V_{l},P_{m}^*)$ satisfies $g_lp=0$. 
Then there exists $u\in \Hom_\Lambda(U_{l},P_m)$ such that $f_{l1}p^0=uf_{m2}$. 

Since we have $f_{l2}u=0$, 
there exists $u'\in\Hom_\La(C_l,P_m)$ such that $u=q_lu'$. 
Since $P_{m}$ is an injective module and $r_l$ is injective, 
there exists $u''\in\Hom_\Lambda(V_{l},P_{m})$ such that $u'=r_lu''$. 
Hence we have $f_{l1}p^0-uf_{m2}=f_{l1}p^0-q_lr_lu''f_{m2}=
f_{l1}(p^0-u''f_{m2})=0.$

Thus there exists $s\in\Hom_\Lambda(P_{l},U_{m})$ such that $p^0-u''f_{m2}=f_{l0}s$.
By the assumption (a2), we have $P_l\notin\add U_{m}$.
Then since $f_{l2}:P_l\to U_{l}$ is a left $(\add(\Lambda/P_l))$-approximation, 
there exists  
$t\in\Hom_\Lambda(U_{l},U_{m})$ such that $p^0-u''f_{m2}=(f_{l0}f_{l2})t$. Then  
$t$ gives a morphism $t\in\TT(U_{l},U_{m})$ satisfying $p=(f_{l0}f_{l2})t$. 
 
\[\xymatrix@C30pt@R30pt{&C_l\ar@{.>}[lddd]|(0.3){u'}\ar[rd]^{r_l}&&\\
&U_{l}\ar[r]^{f_{l1}}\ar[u]|{q_l}\ar[ldd]|{u}& V_{l}\ar[lld]|(0.3){p^0}\ar[r]^{f_{l0}}\ar@{.>}[lldd]|(0.7){u''}  & P_l\ar@{.>}[llld]|(0.4)s \\
U_{m}&P_l\ar[r]\ar[u]|(0.3){f_{l2}}&0\ar[u]\ar[lld]\ar[r]\ar[u]& 0\ar[u]^{} \\  
P_m\ar[u]^{f_{m2}}&&&}\]

(b) 
Assume that $p=(p^i)_{i\in\mathbb{Z}}\in \TT(P_m^*,V_{l})$ satisfies 
$pf_{l0}=0$. 
Then since $p^0f_{l0}=0$, there exists
$h^0\in\Hom_\Lambda(U_{m},U_{l})$ such that $p^0=h^0f_{l1}$. 
Since $f_{m2}h^0f_{l1}=f_{m2}p^0=0,$ there exists
$h^{-1}\in\Hom_\Lambda(P_{m},P_l)$ such that $f_{m2}h^0=h^{-1}f_{l2}$.
Then $h^0,h^{-1}$ give a morphism $h\in\TT(P_m^*,P_l^*)$ satisfying $p=hg_l$.

\[
\xymatrix@C30pt@R30pt{
&&&U_{m}\ar[lld]|{p^0}\ar@{.>}[llld]_(0.5){h^0}\\
U_{l}\ar[r]_{f_{l1}}& V_{l}\ar[r]_{f_{l0}}  & P_l &P_m\ar[u]_{f_{m2}}\ar@{.>}[llld]|{h^{-1}}\ar[lld] \\
P_l\ar[r]\ar[u]^{f_{l2}}&0\ar[r]\ar[u]& 0\ar[u]^{}&   }
\]

Moreover, it is easy to see that the following sequence is exact by (\ref{resolution}) 
$$\TT(\La/P_{\kk},P_l^{\ast})\xrightarrow{g_l}\TT(\La/P_{\kk},V_{l})\xrightarrow{f_{l0}}\TT(\La/P_{\kk},P_l).$$
Hence the second statement follows immediately from the first one.

(c)
Assume that $p=(p^i)_{i\in\mathbb{Z}}\in \TT(P_l^*,V_{j})$ satisfies 
$pf_{j0}=0$.  
Then there exists $h^0\in\Hom_\Lambda(U_{l},U_{j})$ such that $p^0=h^0f_{j1}$. 
Moreover, since $f_{l2}h^0f_{j1}=f_{l2}p^0=0$, there exists 
$h^{-1}\in\Hom_\Lambda(P_l,P_j)$ such that $f_{l2}h^0=h^{-1}f_{j2}.$
Since $l\neq j$, we have $h\in J_{\TT}(P_l^*,P_j^*)$. 
Then by Lemma \ref{obs}, there exists $j_{lj}^0\in \Hom_\Lambda(U_{l},P_j)$ such that 
$h^{-1}=f_{l2}{j}_{lj}^0.$

\[
\xymatrix@C30pt@R30pt{
&&&U_{l}\ar[lld]|{p^0} \ar@{.>}[llld]_(0.5){h^0}\ar@{.>}[llldd]|(0.8){j^0_{lj}}\\
U_{j}\ar[r]_{f_{j1}}& V_{j}\ar[r]_{f_{j0}}  & P_j &P_l\ar[u]_{f_{l2}}\ar@{.>}[llld]|{h^{-1}}\ar[lld] \\
P_j\ar[r]\ar[u]^{f_{j2}}&0\ar[r]\ar[u]& 0\ar[u]^{}&   }
\]

Then $h^0-j^0_{lj}f_{j2}\in\Hom_\Lambda(U_{l},U_{j})$ gives a morphism 
$h\in\TT(P_l^*,U_{j})$ satisfying $p=hf_{j1}$.  
   
\[
\xymatrix@C30pt@R30pt{
&&&U_{l}\ar[lld]|{p^0}\ar[llld]_(0.5){h^0-j^0_{lj}f_{j2}}\\
U_{j}\ar[r]_{f_{j1}}& V_{j}\ar[r]_{f_{j0}}  & P_j&P_l\ar[u]_{f_{l2}}\ar[llld]\ar[lld] \\
0\ar[r]\ar[u]&0\ar[r]\ar[u]& 0\ar[u]^{}&   }
\]

The second statement follows immediately from the first one.
\end{proof}


\subsection{Proof of Propositions \ref{ver l} and \ref{ver i}}

Then we give the proof of Propositions \ref{ver l} and \ref{ver i}.

\begin{proof}
Proposition \ref{ver l} immediately follows from Proposition \ref{I-III} (a). 
We will show Proposition \ref{ver i}.
Since we have 
$\bb^*\vv^*+(f_2\ f_2'){\aaa\uu\choose \cc}={}_{b,v}(b^*v^*+(\partial_{[vb]}W))=0$ by the definition of the algebra homomorphism, it is a complex.

\emph{Step 1}. We will show that $\left(\begin{smallmatrix}\aaa\uu\\ 
\cc\\
{\vv^*}
\end{smallmatrix}\right)$ is right almost split in $\add T$.

(i) First we will show that any morphism 
$p\in J_{\TT}(\La/P_{\kk},P_j)$ factors through ${\aaa\uu\choose \cc}$. 
Since ${\uu\choose \cc}:P_{\kk_1}\oplus V_{j}'{\to} P_j$ is right almost split in $\add\La$, 
there exists $(p_1\ p_2)\in\TT(\La/P_{\kk},P_{\kk_1}\oplus V_{j}')$ such that $p=p_1{\uu}+p_2\cc$.
Since $\aaa:V_{\kk_1}\xrightarrow{} P_{\kk_1}$ is a right $(\add(\Lambda/P_{\kk_1}))$-approximation, 
there exists $q\in\TT(\La/P_{\kk},V_{\kk_1})$ such that $p_1=q\aaa$.
Thus we have $p=(q\ p_2){\aaa \uu\choose \cc}$.
\[\xymatrix@C=2cm{
&&\La/P_{\kk}\ar^p[dd]\ar_{p_1}[dl]\ar_{p_2}[ddl]\ar_q@/_3pc/[dll]\\
V_{\kk_1}\ar[r]^{\aaa}&P_{\kk_1}\ar[dr]^{\uu}&\\
&V_{j}'\ar_{\cc}[r]&P_j
}\]

(ii) Next we take any $p\in J_{\TT}(P_l^{\ast},P_j)$ for $l\in\kk$ with $l\notin \kk_2$. 
Note that there is no arrow $j\to l$ in $Q$ in this case. 
Since $g_l$ is left almost split in $\add T$ by Proposition \ref{I-III} (a), there exists
$q\in\TT(V_{l},P_j)$ such that $p=g_lq$.
Since $P_l\notin\add U_{j}$, we have $P_j\notin\add V_{l}$.
Thus $q\in J_{\TT}(V_{l},P_j)$ holds.
Moreover, by the assumption ($a2$), we have $P_{\kk_1}\notin \add V_{l}$. 
Then the first case implies that
$q$ factors through ${\aaa\uu\choose \cc}$. 
\[\xymatrix@C=2cm{
P_l^{\ast}\ar_p[d]\ar^{g_l}[r]&V_{l}\ar^q[dl]\\
P_j}\]

(iii) Take $l\in \kk_2$ and decompose $P_{\kk_2}^*=({P_l^*})^{n_l}\oplus X$ such that $X\notin\add {P_l^*}$.
We will show that the map
\[(v^*)_{\{v\in Q | v:j\to l\}}\colon (\TT/J_{\TT})(P_l^{*},({P_l^*})^{n_l})\to (J_{\TT}/J_{\add T}^2)(P_l^*,P_j)\]
is bijective.
Since $\TT$ is a Krull-schmidt category and $P_l^*$ is indecomposable, we have $K=(\TT/J_{\TT})(P_l^{\ast},{P_l^{\ast}})$.
On the other hand, by Proposition \ref{I-III} (a), we have that $g_l:P_l^* \to V_{l}=\oplus_{\{a\in Q,e(a)=l\}}P_{s(a)}$ is minimal left almost split in $\add T$
since the middle morphism $f_{l0}f_{l2}$ in the sequence of Proposition \ref{I-III} (a) belongs to $J_{\TT}$.
Thus we have that
$(J_{\TT}/J_{\add T}^2)(P_l^*,P_j)$ is a $K$-vector space with basis $\{v^*\ |\ v\in Q_1; v : j\to l \}$. Thus the above map is bijective.


Then take any $p\in J_{\TT}(P_l^{\ast},P_j)$ for $l\in \kk_2$. 
Let $(v^*)_v:({P_l^*})^{n_l}\to P_i$ be a restriction of $\vv^{*}$. 
By the above bijection, there exists $p_1\in\TT(P_l^{\ast},({P_l^*})^{n_l})$ such that $p-p_1(v^*)_v\in J_{\add T}^2(P_l^*,P_j)$.
Since $g_l$ is right almost split in $\add T$ by (a), 
there exists $q\in J_{\TT}(V_{l},P_j)$ such that $p-p_1(v^*)_v=g_lq$. 
Then, by the same argument of (i) and (ii), $q$ factors through ${\aaa\uu\choose \cc}$.
Thus $p$ factors through $\left(\begin{smallmatrix}\aaa\uu\\ 
\cc\\
{\vv^*}
\end{smallmatrix}\right).$

\[\xymatrix@C=2cm{
V_{\kk_1}\ar[r]^{\aaa}&P_{\kk_1}\ar[dr]^{\uu}&P_l^*\ar^p[d] \ar[ddl]|(0.2){p_1}\ar[r]^{g_l}&V_{l}\ar[dl]^{q}\\
&V_{j}'\ar[r]|(0.3){\cc}&P_j\\
&P_{\kk_2}^*\ar[ur]_{\vv^*}&}\]

\emph{Step 2}. We will show that $\left(\begin{smallmatrix}\aaa^*&0&0\\ 
f_1&f_1'&0\\
f_2&f_2'&{\bb}^*
\end{smallmatrix}\right)$ is a pseudo-kernel of $\left(\begin{smallmatrix}\aaa\uu\\ 
\cc\\
{\vv^*}
\end{smallmatrix}\right)$ in $\add T$.

Assume that $(p_1\ p_2\ p_3)\in\TT(T ,V_{\kk_1}\oplus V_{j}'\oplus P_{\kk_2}^*)$ satisfies $(p_1\ p_2\ p_3)\left(\begin{smallmatrix}\aaa\uu\\ 
\cc\\
{\vv^*}
\end{smallmatrix}\right)=0$.
We first show that there exists $q_1\in\TT(T,U_{\kk_2})$ such that $p_3=q_1\bb^*$.
Since $\bb^*$ is right almost split in $\add T$ by Proposition \ref{I-III} (a), we only have to show $p_3\in J_{\TT}$. 
We only have to consider the case $T=P_l^{\ast}$ for $l\in \kk_2$. 
Then since $p_3\vv^*=-p_1\aaa\uu-p_2\cc\in J_{\add T }^2$, we have $p_3\in J_{\TT}$.

Then we have 
\begin{eqnarray*}
((p_1-q_1f_2)\aaa\ \ p_2-q_1f_2'){ \uu\choose \cc}&=&({p_1\ p_2}){\aaa\uu\choose \cc}-q_1({f_2\ f_2'}){\aaa\uu\choose \cc}\\
&=&({p_1\ p_2}){\aaa\uu\choose \cc}+q_1\bb^*\vv^{*}\\
&=&({p_1\ p_2}){\aaa\uu\choose \cc}+p_3\vv^{*}\\
&=&0.
\end{eqnarray*}

Therefore, by Proposition \ref{I-III} (c),   
there exists $(g_1\ g_2)\in\TT(T,U_{j}'\oplus P_{\kk_2})$ such that 
$$(g_1\ g_2){f_1\aaa\ \ f_1' \choose \bb f_2\aaa \ \ \bb f_{2}'}=((p_1-q_1f_2)\aaa\ \ p_2-q_1f_2').$$
Thus we have 
$(p_1\aaa\ p_2\ p_3)=(g_1\ q_2){f_1\aaa\ \ f_1' \ \ \ 0\choose f_2\aaa\ \ f_2'\ \ \bb^*},$ 
where we put $q_2=q_1+g_2\bb$.

Moreover, since we have $(p_1-(g_1f_1+q_2f_2))\aaa=0$,  
there exists $q_3\in\TT(T,P_{\kk_1}^*)$ such that $q_3\aaa^*=p_1-(g_1f_1+q_2f_2)$ by Proposition \ref{I-III} (b). 
Thus we have 
$$(p_1\ p_2\ p_3)=(q_3\ g_1\ q_2)\left(\begin{smallmatrix}\aaa^*&0&0\\ 
f_1&f_1'&0\\
f_2&f_2'&{\bb}^*
\end{smallmatrix}\right).$$
\[\xymatrix@C50pt@R30pt{&&&&T\ar|(0.3){p_1}[lld]\ar|(0.3){p_2}[ldd]\ar|(0.3){p_3}@/^1pc/[lddd]\ar@{.>}_{q_3}@/_1pc/[llld]\ar@{.>}|{g_1}@/^1pc/[llldd]\ar@{.>}|{g_2}@/^1pc/[lllddd]\ar@{.>}|(0.3){q_1}@/^8pc/[llddd]\\
&P_{\kk_1}^* \ar[r]|{\aaa^*}  &V_{\kk_1} \ar[r]|{\aaa} & P_{\kk_1} \ar[rd]^{\uu}  &   \\
P_j\ar[r]|{\dd}\ar[dr]|{\vv}&U_{j}'\ar[rr]|(0.3){f_1'}\ar[ur]^{f_1}    &   & V_{j}'  \ar[r]|{\cc}&P_j  \\
&P_{\kk_2}\ar[r]|{\bb} & U_{\kk_2}\ar[r]|{\bb^*} \ar[ur]|{f_2'}\ar[uu]|(0.2){f_2}   & P_{\kk_2}^* \ar[ru]|{{\vv^*}} &  }
\] 

\end{proof}

\subsection*{Acknowledgement}
The author is supported by Grant-in-Aid
for JSPS Fellowships No.23.5593. 
He would like to thank Osamu Iyama for his support and patient guidance. He would like to thank Hideto Asashiba for stimulating discussions and questions. He is grateful to Martin Herschend, who kindly explain the construction of selfinjective QPs, and Kota Yamaura and Takahide Adachi for their valuable comments and advice. 
The author is very grateful to the referees for the valuable comments and suggestions, which improve the presentation.


\end{document}